  \newcounter{constant}
  \newcommand{\newconstant}[1]{\refstepcounter{constant}\label{#1}}
  \newcommand{\uc}[1]{c_{\textnormal{\tiny \ref{#1}}}}
\def\arraypar#1{\parbox[c]{.8\textwidth}{\centering #1}}
\newcommand{\I}{\mathds{1}}
\newcommand{\distr}{\ensuremath{\stackrel{\scriptstyle d}{=}}}
\newcommand{\Vol}{\mathop{\mathrm{Vol}}\nolimits}
\newcommand{\diam}{\mathop{\mathrm{diam}}\nolimits}
\newcommand{\badC}{\textup{\textbf{C}}^{\ast}}
\newcommand{\bbadC}{\smash{\bar{\textup{\textbf{C}}}}^{\ast}}
\newcommand{\tbadC}{\smash{\tilde{\textup{\textbf{C}}}}^{\ast}}
\newcommand{\hLambda}{\smash{\widehat{\Lambda}}}
\newcommand{\ZB}{{\mathsf B}}
\newcommand{\comp}{{\mathsf c}}
\newcommand{\cB}{\ensuremath{\mathcal{B}}}
\newcommand{\cD}{\ensuremath{\mathscr{D}}}
\newcommand{\cE}{\ensuremath{\mathcal{E}}}
\newcommand{\cH}{\ensuremath{\mathcal{H}}}
\newcommand{\cP}{\ensuremath{\mathcal{P}}}
\newcommand{\cW}{\ensuremath{\mathcal{W}}}
\newcommand{\EE}{\ensuremath{\mathbb{E}}}
\newcommand{\NN}{\ensuremath{\mathbb{N}}}
\newcommand{\PP}{\ensuremath{\mathbb{P}}}
\newcommand{\RR}{\ensuremath{\mathbb{R}}}
\newcommand{\ZZ}{\ensuremath{\mathbb{Z}}}
\theoremstyle{plain}
\newtheorem{prop}{Proposition}
\newtheorem{teo}{Theorem}
\newtheorem{lema}{Lemma}
\newtheorem{coro}{Corollary}
\newtheorem*{teo:vacant_crossing_of_boxes_alpha_2}{Theorem \ref{teo:vacant_crossing_of_boxes_alpha_2}}
\newtheorem*{teo:alpha_phase_transition}{Theorem \ref{teo:alpha_phase_transition}}
\theoremstyle{definition}
\newtheorem{defi}{Definition}
\theoremstyle{remark}
\newtheorem{remark}{Remark}
\title{Euclidean and chemical distances in ellipses percolation}
\author[1]{Marcelo Hil\'ario}
\author[2]{Daniel Ungaretti}
\affil[1,2]{Universidade Federal de Minas Gerais}
\affil[2]{Universidade de S\~ao Paulo}
\begin{document}
\maketitle

\begin{abstract}
The ellipses model is a continuum percolation process in which ellipses with
random orientation and eccentricity are placed in the plane according to a
Poisson point process. A parameter $\alpha$ controls the tail distribution
of the major axis' distribution and we focus on the regime $\alpha \in
(1,2)$ for which there exists a unique infinite cluster of ellipses and
this cluster fulfills the so called highway property. We prove that the
distance within this infinite cluster behaves asymptotically like the
(unrestricted) Euclidean distance in the plane.  We also show that the
chemical distance between points $x$ and $y$ behaves roughly as
$c \log\log |x-y|$.
\end{abstract}



    



\section{Introduction}
\label{sec:introduction}

In this paper we study both the chemical and Euclidean distances in the
\textit{ellipses model} introduced in~\cite{teixeira2017ellipses}. It is a
Boolean percolation in the plane with defects given by random ellipses centered
at points given by a Poisson point process with intensity $u > 0$. Given the
position of the centers, the eccentricities and orientations of the ellipses
are independent. The minor axes always have length one and they make uniformly
distributed angles with the horizontal direction. The lengths of the major axes
are drawn independently from a heavy-tailed distribution $\rho$ supported on
$[1, \infty)$ that satisfies $\rho[r,\infty) = c r^{-\alpha}$ for $r \geq 1$.
Therefore, while the parameter $u$ controls the amount of ellipses appearing in
the picture, the parameter $\alpha$ controls how eccentric they are.

In~\cite{teixeira2017ellipses}, phase transition and connectivity properties
for the ellipses model were studied as functions of these two parameters.  Here
we will focus on $\alpha \in (1,2)$, the regime in which, for any choice of $u
> 0$, there exists a unique infinite cluster of ellipses that, in addition,
satisfies what we refer to as the \textit{highway property}. Roughly, it means
that after scaling the probability of connecting two regions using a single
ellipse becomes close to one.

Let $\cD(x, y)$ denote the minimum length of a polygonal path from $x$ to $y$
which lies entirely inside the set covered by the ellipses.  We call $\cD
(\cdot \, , \cdot)$ the \emph{Euclidean distance} restricted to the set of
ellipses or sometimes the \emph{internal distance}.  Also, for any two points
$x$ and $y$ in the infinite cluster of ellipses, denote by $D(x,y)$ the
\textit{chemical distance} between them, i.e.\ the minimum number of ellipses
that a continuous path from $x$ to $y$ contained entirely inside the cluster of
ellipses has to intersect.  The Euclidean distance in the plane, sometimes
called the unrestricted Euclidean distance, is denoted by $|x-y|$.

Let us now state our main results 
\begin{teo}[Euclidean distance]
\label{teo:distance_ellipses_l2}
Consider the ellipses model with parameters $u>0$ and $\alpha \in (1, 2)$.
For $x, y \in \RR^{2}$ and $\delta > 0$, 
\begin{equation}
\label{eq:distance_ellipses}
\lim_{|x-y| \to \infty}
    \PP\Bigl(
    1 \le \frac{\cD(x,y)}{|x-y|} \le 1 + |x-y|^{\frac{\alpha-2}{4} + \delta}\,
    \Bigm| \, x \leftrightarrow y
    \Bigr)
    = 1.
\end{equation}
\end{teo}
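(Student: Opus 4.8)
The lower bound $\cD(x,y)\ge |x-y|$ is immediate, since every polygonal path inside the covered set is in particular a path in $\RR^{2}$; the whole content is therefore the upper bound, which I would obtain by explicitly constructing an efficient covered path. Using the translation and rotation invariance of the model I may assume $x$ and $y$ lie on the horizontal axis with $L:=|x-y|$ large, and the aim is to build, on an event whose probability tends to $1$ and which survives conditioning on $\{x\leftrightarrow y\}$, a covered polygonal path from $x$ to $y$ of length at most $L+L^{(\alpha+2)/4+\delta}$, which is equivalent to $\cD(x,y)/L\le 1+L^{(\alpha-2)/4+\delta}$.

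The construction runs at two scales. Put $\ell:=\lfloor L^{1/2}\rfloor$ and split $[x,y]$ into $m\asymp L^{1/2}$ consecutive sub-segments of length $\ell$ with endpoints $x=z_{0},z_{1},\dots,z_{m}=y$; around each $z_{i}$ place a square box $B_{i}$ of side $r:=\ell^{\alpha/2+\eta}$ for a small $\eta>0$ (so $r\to\infty$ but $r\ll\ell$). At the coarse scale $\ell$ I use the highway property: for each $i$ there is, with overwhelming probability, an ellipse $H_{i}$ whose major axis has length of order $\ell$, which lies within distance $O(r)$ of the sub-segment $z_{i-1}z_{i}$ and meets both $B_{i-1}$ and $B_{i}$. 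Indeed the first-moment count of admissible such ellipses is of order $u\,r^{2}\ell^{-\alpha}=u\,\ell^{2\eta}$ --- center in a strip of area $\asymp \ell r$, orientation in a window of size $\asymp r/\ell$, major axis $\asymp\ell$ with probability $\asymp\ell^{-\alpha}$ --- so the probability that no admissible $H_{i}$ exists is at most $e^{-c\ell^{2\eta}}$, and one may in addition require $H_{i}$ to belong to the infinite cluster. At the fine scale $r$, both $H_{i-1}$ and $H_{i}$ pass within $O(r)$ of $z_{i}$, and the connectivity estimates at scale $r$ available from the earlier sections produce a covered path of length $O(r)$ joining them inside a bounded dilation of $B_{i}$; likewise $x$ is joined to $H_{1}$ inside $B_{0}$, and $y$ to $H_{m}$ inside $B_{m}$, by covered paths of length $O(r)$. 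Concatenating all these arcs yields a covered path from $x$ to $y$.

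For the length estimate, traversing $H_{i}$ across the $i$-th sub-segment costs $\ell+O(r^{2}/\ell)$, since a segment of length $\ell$ confined to a tube of width $O(r)$ exceeds a straight crossing by $O(r^{2}/\ell)$; summing, the ``long'' arcs contribute $L+O(Lr^{2}/\ell^{2})=L+O(L^{\alpha/2+\eta})$, which is negligible compared with $L^{(\alpha+2)/4}$. The $m+2\asymp L^{1/2}$ junction arcs each have length $O(r)=O(L^{\alpha/4+\eta/2})$, so they contribute $O(L^{1/2}\cdot L^{\alpha/4+\eta/2})=O(L^{(\alpha+2)/4+\eta/2})$; choosing $\eta$ small in terms of $\delta$ makes the total excess at most $L^{(\alpha+2)/4+\delta}$. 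The failure probabilities combine by a union bound over the $O(L^{1/2})$ boxes: each coarse highway fails with probability at most $e^{-c\ell^{2\eta}}$, and each fine-scale junction connection, each endpoint connection, and each ``$H_{i}$ lies in the infinite cluster'' event fails with a probability at most polynomially small in $r$ by the results of the earlier sections, so the sum still tends to $0$. Finally $\PP(x\leftrightarrow y)$ is bounded below by a positive constant --- by the FKG inequality it is at least the square of the probability that a fixed point belongs to the infinite cluster --- so conditioning on $\{x\leftrightarrow y\}$ multiplies all these probabilities by a bounded factor, and the conditional statement in \eqref{eq:distance_ellipses} follows.

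The delicate points, where I expect the real work to be, are combinatorial-geometric rather than a single sharp inequality: the ``good'' events must be defined so as to be governed by essentially disjoint regions of the plane, so that the first-moment and union-bound reasoning and the infinite-cluster assignments are legitimate; one must prove that two ellipses meeting a box of side $r$ are joined by a covered path of length only $O(r)$, which is itself a renormalisation argument built on the highway property at scale $r$; and one must track the overlaps between successive arcs so that their concatenation is a bona fide covered polygonal path of the asserted length. How smoothly this proceeds depends on the exact form in which the highway property and the within-box connectivity statements are available from the earlier sections, and the bulk of the proof consists in threading those inputs together.
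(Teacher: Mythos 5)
Your high-level plan (subdivide $[x,y]$, bridge consecutive boxes with a single long ellipse using the highway property, then glue locally, then appeal to the fact that the endpoint connections survive conditioning on $\{x\leftrightarrow y\}$) is in the right spirit, and it is genuinely different from the paper's argument: the paper runs a full Biskup-style \emph{hierarchy} (scales $\tilde N_k = \tilde N^{\gamma^k}$, $k=0,\dots,n$) fed through a coupling with long-range percolation, then glues with a \emph{fixed}-scale good-box renormalisation à la Antal--Pisztora, whereas you use a single coarse scale $\ell\approx L^{1/2}$ and growing boxes of side $r$. But the proposal has a genuine gap that your own accounting sweeps under the rug, and it occurs precisely at the place that forces the paper to use a hierarchy rather than a two-scale scheme.

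The problematic claim is that ``$x$ is joined to $H_1$ inside $B_0$ \dots\ by a covered path of length $O(r)$.'' This is not available from anything earlier in the paper, and no simple argument gives it. On $\{x\leftrightarrow y\}$ you know that $x$ is connected to far away, hence to any circuit of ellipses surrounding $B_0$; but the \emph{length} of the covered path from $x$ to that circuit is only controlled by the deterministic Lemma~\ref{lema:distance_small_scales}, which for a region $W$ of diameter $\asymp r$ gives $\cD(x,\partial W)\le\bigl(\tfrac2\pi\Vol(W)+1\bigr)\diam(W)=O(r^3)$, not $O(r)$. (The cubic is not an artefact: a covered set that forms a spiral corridor of width~$1$ in a box of side $r$ already gives a shortest path of order $r^2$; the extra factor comes from each ellipse possibly being long.) With your $r=L^{\alpha/4+\eta/2}$ this gives an endpoint contribution of order $L^{3\alpha/4+3\eta/2}$, and $3\alpha/4>(\alpha+2)/4$ exactly when $\alpha>1$, which is the whole regime. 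So, with the honest endpoint bound, your total overshoot exceeds the target for every $\alpha\in(1,2)$. The paper sidesteps this because the hierarchy shrinks the gap regions $W_\sigma$ down to $\#W_\sigma\lesssim\tilde N_{n-2}=e^{O((\log\log N)^\varepsilon)}$ boxes of \emph{constant} side $K$, so the cubic term is only $e^{O((\log\log N)^\varepsilon)}=N^{o(1)}$, harmlessly subpolynomial; your single-scale scheme has a polynomial-size gap at the endpoints and inherits a polynomial cost there.

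Two secondary points. First, your first-moment count $u\,r^2\ell^{-\alpha}$ does not match what Lemma~\ref{lema:cross_box_one_ellipse} actually gives for a long-direction crossing of an $\ell\times r$ tube: plugging $l=\ell$, $k=r/\ell<1$ gives exponent $u(k\wedge k^{-\alpha})\,l^{2-\alpha}=u\,r\,\ell^{1-\alpha}$, which is \emph{larger} than your $u r^2\ell^{-\alpha}$ by a factor $\ell/r$. This is in your favour (you overbuilt $r$), but it means the heuristic orientation-window count is not the one the paper's tools certify, and you should derive your crossing probability directly from that lemma rather than by an ad hoc count. Second, asking that each $H_i$ ``belong to the infinite cluster'' and asserting that this fails with probability polynomially small in $r$ is incorrect (the probability that a given ellipse misses the infinite cluster is bounded away from zero, not small) and is also unnecessary: if all the highways and junction ellipses exist and overlap, they form a single connected covered set by construction, and the only place one genuinely needs $\{x\leftrightarrow y\}$ is at the two endpoint boxes, via a circuit-crossing argument as in the paper. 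Fixing the endpoint connection is the real obstacle; as written the proposal does not close.
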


\begin{teo}[Chemical distance]
\label{teo:chem_distance_ellipses}
Consider the ellipses model with parameters $u>0$ and $\alpha \in (1, 2)$.
For $x, y \in \RR^{2}$ and $\delta > 0$, 
\begin{equation}
\label{eq:chem_distance_ellipses}
\lim_{|x-y| \to \infty} \PP\Bigl(
    \frac{1-\delta}{\log (\frac{2}{\alpha - 1})}
        \le \frac{D(x,y)}{\log \log |x-y|}
        \le \frac{2 + \delta}{\log (\frac{2}{\alpha})} \, \Bigm| \, x \leftrightarrow y
    \Bigr)
    = 1.
\end{equation}
\end{teo}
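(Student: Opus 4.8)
The plan is to prove the two inequalities in~\eqref{eq:chem_distance_ellipses} separately, each with probability tending to $1$ conditionally on $\{x\leftrightarrow y\}$. Since for $\alpha\in(1,2)$ there is a.s.\ a unique infinite cluster $\cC_\infty$ and $\theta(u):=\PP(x\in\cC_\infty)>0$, the FKG inequality gives $\PP(x\leftrightarrow y)\ge\PP(x\in\cC_\infty)\PP(y\in\cC_\infty)=\theta(u)^2>0$ uniformly in $x,y$, so conditioning inflates probabilities by at most the constant $\theta(u)^{-2}$ and it suffices to prove both bounds with unconditional probability $\to1$. Write $n=|x-y|$.

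\emph{Lower bound (exploration argument).} For $k\ge0$ let $C_k=C_k(x)$ be the union of all ellipses reachable from the ellipse(s) covering $x$ by a chain of at most $k$ ellipses, so that $\{D(x,y)\le k\}=\{y\in C_k\}$, and let $R_k$ be the radius of the smallest disc centred at $x$ containing $C_k$. The geometric input is a first--moment (Mecke) estimate: by Steiner's formula the Minkowski sum of an ellipse of semi-axes $\ell/2,1/2$ with a disc of radius $R$ has area $\lesssim\ell R$ whenever $\ell\ge R\ge1$, so
\[
\EE\bigl[\#\{E:\ell(E)\ge\lambda,\ E\cap B(x,R)\ne\emptyset\}\bigr]\ \asymp\ R\,\lambda^{1-\alpha}\qquad(1\le R\le\lambda),
\]
and hence, with high probability, every ellipse meeting $B(x,R)$ has major axis $\lesssim R^{1/(\alpha-1)}$. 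Revealing the ellipses in the order prescribed by the exploration building $C_0,C_1,\dots$, the ellipses not yet revealed still form a Poisson process given $\mathcal F_k:=\sigma(C_0,\dots,C_k)$; since every ellipse contributing to $C_{k+1}\setminus C_k$ meets $B(x,R_k)$ and has diameter equal to its major axis, $R_{k+1}\le R_k+\max\{\ell(E):E\cap B(x,R_k)\ne\emptyset\}$, so $\PP\bigl(R_{k+1}>R_k+t\,R_k^{1/(\alpha-1)}\mid\mathcal F_k\bigr)\lesssim t^{1-\alpha}$. A union bound over $k\le m$ with $t$ of order $m^{2/(\alpha-1)}$, together with the easy fact that $R_0\le(\log n)^{2}$ with high probability, gives $\log R_m\le(\alpha-1)^{-m}\,O(\log\log n)$ simultaneously for all $m\le(1-\delta)\log\log n/\log\tfrac1{\alpha-1}$; for such $m$ one has $R_m<n$, hence $y\notin C_m$ and $D(x,y)>m$. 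Since $\log\tfrac1{\alpha-1}\le\log\tfrac2{\alpha-1}$, this yields the lower bound in~\eqref{eq:chem_distance_ellipses} (indeed with the larger constant $1/\log\tfrac1{\alpha-1}$).

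\emph{Upper bound (multiscale highway construction).} Here I would combine the highway property of~\cite{teixeira2017ellipses} with the elementary fact that a box of side $L$ contains, with high probability, an ellipse whose major axis is of order $(uL^{2})^{1/\alpha}\asymp L^{2/\alpha}$, whose orientation lies in a prescribed window of angular size $\asymp1$, and whose centre is near the centre of the box; such an ellipse crosses the box and protrudes a distance $\asymp L^{2/\alpha}$ beyond it. Fix scales $L_0=(\log n)^{C}$ and $L_{j+1}=\lfloor L_j^{2/\alpha-\varepsilon}\rfloor$, and let $J$ be the first index with $L_J\ge n$, so $J=(1+o(1))\log\log n/\log(2/\alpha)$. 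Arranging nested boxes of sides $L_0,L_1,\dots,L_J$ along the segment from $x$ towards $y$ and choosing at each scale a long protruding crossing ellipse whose angle differs enough from that of the previous one that the two intersect inside the overlap of consecutive boxes (this is where the highway property enters, in the robust form ``a single ellipse connects region $A$ to region $B$ with probability close to $1$''), one produces a chain of $J$ ellipses from the scale-$L_0$ box around $x$ to a crossing reaching past $y$; doing the same from $y$ and noting that the two top-scale crossings meet near the midpoint, one obtains a chain of $(2+o(1))\log\log n/\log(2/\alpha)$ ellipses from $x$ to $y$. It remains to connect the actual points $x$ and $y$ to their lowest-scale crossing ellipses; this costs only $O(\log\log L_0)=o(\log\log n)$ further ellipses, which one gets by running the same construction at the smaller scale $L_0$ (a downward recursion on scales) and absorbs into $\delta$. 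This yields $D(x,y)\le(2+\delta)\log\log n/\log(2/\alpha)$ with high probability.

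\emph{Main obstacle.} The delicate part is the upper bound, specifically the requirement that each increase of scale be realised with a bounded (essentially single) number of ellipses: one must set up the nested boxes and the target regions of the highway crossings so that the scale-$L_j$ and scale-$L_{j+1}$ crossings are forced to intersect while the latter still protrudes to scale $\asymp L_{j+1}^{2/\alpha}$, and one must dispatch the base step --- connecting the endpoints $x,y$ to the lowest-scale highway --- without circularity, via the downward recursion above. By comparison the lower bound is routine once the Minkowski-sum estimate and the Poisson exploration are in place.
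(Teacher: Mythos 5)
Your two halves have very different statuses, so let me address them separately.

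\textbf{Lower bound.} This is a genuinely different argument from the paper and, if the exploration details are nailed down, actually gives a \emph{stronger} constant. The paper proves the lower bound by a doubling induction on $D(B(1),\partial B(L))$ (Proposition~\ref{prop:chem_dist_low_induction}), splitting the annulus and paying a multiplicative factor of order $l$ to union-bound over a covering of $\partial B(l)$; this is precisely what forces the factor $\tfrac12$ in $\gamma=\tfrac{\alpha-1}{2}$ and hence the constant $1/\log\tfrac{2}{\alpha-1}$. Your exploration argument tracks the radius $R_k$ of the chemical ball directly and only ever uses the first-moment bound $\PP(\exists\, E:\ell(E)\ge\lambda,\ E\cap B(x,R)\ne\varnothing)\lesssim R\lambda^{1-\alpha}$, yielding $R_{k+1}\lesssim R_k^{1/(\alpha-1)}$ and hence the constant $1/\log\tfrac{1}{\alpha-1}>1/\log\tfrac{2}{\alpha-1}$. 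The key points that must be said carefully — that given $\mathcal F_k$ the unrevealed ellipses remain Poisson on the sub-domain of ellipses missing $C_{k-1}$, that their count meeting $B(x,R_k)$ is stochastically dominated by the unconditional Poisson count, and that the conditional bound $\PP(R_{k+1}>R_k+tR_k^{1/(\alpha-1)}\mid\mathcal F_k)\lesssim t^{1-\alpha}$ is uniform in $\omega$ so the union bound over $k\le m$ is licit — all seem to check out. So this is a cleaner route to a sharper lower bound; the paper even remarks that its own lower-bound constant is likely not optimal, so this observation is worth highlighting.

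\textbf{Upper bound.} Here you are essentially reproducing Proposition~\ref{prop:chem_dist_up}: a sequence of rapidly growing scales, roughly $L_{j+1}\approx L_j^{2/\alpha}$, crossed by single ellipses (the ``enhanced highways''), combined from $x$ and from $y$. Two points where your sketch is weaker than the paper's. First, the intersection of crossing ellipses at consecutive scales: you invoke an ad-hoc ``angle differs enough'' condition, whereas the paper arranges nested rectangles $B_n$ alternating between horizontal and vertical orientations and crosses each in its long direction, so that consecutive crossings are perpendicular crossings of nested rectangles and must intersect by planar topology — this is cleaner and easier to verify. Second, and more seriously, the base step of connecting $x$ (resp.\ $y$) to the lowest-scale crossing: the ``downward recursion at scale $L_0$'' as described would either regress indefinitely (the same problem reappears at the smaller scale) or leave the bottom unaccounted for. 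The paper circumvents this entirely with the deterministic Lemma~\ref{lema:distance_small_scales}: on the event $x\leftrightarrow\partial W$ one has $D(x,\partial W)\le \tfrac{2}{\pi}\Vol(W)+1$, and taking $W$ to be the bounded scale-$l_{n_0}$ box (with $n_0$ a fixed constant depending only on the allowed failure probability $\varepsilon$) this is an $O_\varepsilon(1)$ cost, automatically $o(\log\log|x-y|)$. You should replace your downward recursion by an argument of this kind; without it, the upper bound is not complete.

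Minor remark: the reduction to unconditional probabilities via $\PP(x\leftrightarrow y)\ge\theta(u)^2$ and $\PP(\cdot\mid x\leftrightarrow y)\le\theta(u)^{-2}\PP(\cdot)$ is correct and is implicitly the same device the paper uses.
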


To understand geometric properties of infinite clusters is
a problem of major interest in percolation theory. Models for which the
chemical distance was studied include Bernoulli percolation and first-passage
percolation~\cite{antal1996chemical, garet2004asymptotic,
garet2007large}; random interlacements~\cite{cerny2012internal}; random walk
loop soup~\cite{chang2017supercritical}; and Gaussian free
field~\cite{ding2018chemical, drewitz2014chemical}. General conditions for a
percolation model on $\ZZ^{d}$ to have a unique infinite cluster in which
Euclidean and chemical distances are comparable are provided
in~\cite{drewitz2014chemical}.

Theorems~\ref{teo:distance_ellipses_l2} and~\ref{teo:chem_distance_ellipses}
show that ellipses model does not fit into the conditions
of~\cite{drewitz2014chemical}. This is due to the presence of long ellipses.
A similar behavior can be observed in Poisson cylinder
model~\cite{tykesson2012percolation} and long-range
percolation~\cite{newman1986one, biskup2004scaling}, as we discuss next.

\subsection{Comparing with long-range models}
\label{sub:comparing_long_range_models}

Ellipses model is closely related to other two percolation models that allow for
arbitrarily long connections: \textit{Poisson cylinders model} on
$\RR^d$ and \textit{long-range percolation} on $\ZZ^{d}$. In principle
one could try to leverage these relations in order to obtain estimates for the
distances in ellipses model, and indeed some of our results are obtained this
way.

We emphasize that the highway property is shared by these three models, with
the immediate adaptations that connection of far away regions is accomplished 
using a single cylinder or a single open edge for Poisson cylinders and long-range model, respectively. 
The highway property is the main tool to ensure that, in all three models, the distance
inside the infinite cluster is asymptotically equivalent to the unrestricted
Euclidean distance in the plane.

However, the behavior of the chemical distance differs completely in each of
these models. 
Before elaborating on these differences, we give a quick
introduction to Poisson cylinder model and long-range percolation.

\medskip
\noindent
\textbf{Poisson Cylinders.}
Poisson cylinders model consists of a random collection of bi-infinite
cylinders of radius one whose axes are given by a Poisson point process on the
space of all the lines (i.e.\ affine one-dimensional subspaces) in $\RR^{d}$
with $d \geq 3$, see~\cite{tykesson2012percolation} for details.
Distances within clusters of cylinders were studied
in~\cite{broman2016connectedness} and ~\cite{hilario2019shape}.
In~\cite{broman2016connectedness} the authors prove that almost surely any two
cylinders are linked by a sequence composed of at most $d-2$ other intersecting
cylinders, implying that the chemical distance is bounded.
For the Euclidean distance on the other hand, in~\cite{hilario2019shape} the
authors prove a shape theorem showing that if $x, y \in \RR^{d}$ are points in
the infinite cluster then the internal distance between $x$ and $y$ is
asymptotically $|x-y| + O(|x-y|^{1/2 + \varepsilon})$, for any $\varepsilon > 0$.

One straightforward connection between Poisson cylinders and ellipses model is
to study the intersection of the random cylinders with any given
$2$-di\-men\-sion\-al plane. As shown in~\cite{ungaretti2017phdthesis} this
intersection is a collection of ellipses whose law is an instance of the
ellipses model with $\alpha=2$ when $d=3$ and, with $\alpha > 2$ in
higher dimensions. Thus, this natural coupling between ellipses model and
Poisson cylinder model is not helpful to draw conclusions when $\alpha$ ranges
in $(1,2)$.


\medskip
\noindent
\textbf{Long-range percolation.}
Fix $\beta, s > 0 $ and consider the bond percolation model, known as
long-range percolation, in which for each $x \neq y \in \ZZ^{d}$ an open edge
connects $x$ and $y$ with probability
\begin{equation*}
p_{xy} = 1 - \exp[ -\beta |x-y|^{-s} ]. 
\end{equation*}
Different expressions for $p_{xy}$ may be considered but it is usually assumed
that it decays roughly as $\beta |x-y|^{-s + o(1)}$ for some positive $\beta$
and $s$.

Let us now explain how long-range percolation and ellipses model relate to each
other. Notice that both models have one parameter that controls the density
($u$ and $\beta$, respectively) and another that controls the distribution of
long connections ($\alpha$ and $s$, respectively). Essentially, a
discretization of ellipses model leads to a long-range percolation with
parameters satisfying the following relations
\begin{equation}
\label{eq:parameters_relation}
s= 2+ \alpha
\quad \text{and} \quad
u = \frac{\pi \beta}{\alpha 2^{\alpha}}.
\end{equation}
The coupling is given as follows.
Take $B:=[-1/2, 1/2)^{2}$ and for $x \in \RR^{2}$ write $B_x = x + B$ so that
$(B_x)_{x \in \mathbb{Z}^2}$ forms a tiling of $\mathbb{R}^2$.  For a
realization of the ellipses model with parameters $u$ and $\alpha$  associate
with every ellipse the two extremities of its major axis.  Now embed
$\mathbb{Z}^2$ in $\mathbb{R}^2$ in the natural way and define two sites $x
\neq y \in \ZZ^{2}$ to be $\xi$-connected and write $x \sim_\xi y$ if there is
an ellipse whose major axis has one extremity in $B_x$ and the other in $B_y$.  
Inserting open
edges between pairs of $\xi$-connected sites leads to a long-range percolation
model whose parameters $s$ and $\beta$ satisfy~\eqref{eq:parameters_relation},
as we show in Section~\ref{sec:preliminaries}.

We will explore this coupling to translate results about long-range percolation
to results about ellipses model. However, there are some key points that must
be dealt with when comparing connectivity in these models using the coupling
described above.

A first issue is that, in some situations, connectivity is favored in ellipses
model.  In fact, when two long ellipses cross each other it may occur that the
resulting open edges in the long-range model belong to different components.
This suggests that connectivity properties in these two models may differ.
Indeed in~\cite[Theorem~1.2]{teixeira2017ellipses} it is shown that in ellipses
model with $\alpha \in (1,2)$ the covered set percolates for any intensity $u>0$.
The corresponding long-range percolation (with $s \in (3,4)$
by~\eqref{eq:parameters_relation}) does not percolate for sufficiently small
$\beta$, since when $\sum_{z\in \ZZ^{2}} p_{0z} < 1$ the open cluster of the
origin is dominated by a subcritical Galton-Watson tree.

A second issue affects connectivity in the opposite direction.  Notice that
having $x \sim_{\xi} y \sim_{\xi} z$ does not ensure that, in the underlying
ellipses model, the corresponding ellipses overlap since it may occur that two
ellipses intersect the box $B_y$ without touching each other, see
Figure~\ref{fig:coupling_fails}.
\begin{figure}[ht]
\centering
\begin{tikzpicture}[scale=1]
\draw (0, 0) rectangle ++(1, 1);
\draw (2, 1) rectangle ++(1, 1);
\draw (-3, 1) rectangle ++(1, 1);
\filldraw[gray, opacity=.3,rotate around={25:(1.5,1.4)}]
    (1.5,1.4) ellipse (1 and .3);
\filldraw[gray, opacity=.3,rotate around={-15:(-1.3, .7)}]
    (-1.3, .7) ellipse (1.6 and .3);
\begin{scope}[shift={(9,0)}]
\draw (0, 0) rectangle ++(1, 1);
\draw (-5, 1) rectangle ++(1, 1);
\filldraw[gray, opacity=.3,rotate around={-10:(-2.3, .7)}]
    (-2.3, .7) ellipse (2.7 and .3);
\draw (-4, 0) rectangle ++(1, 1);
\draw (-2, 1) rectangle ++(1, 1);
\filldraw[gray, opacity=.3,rotate around={30:(-2.7,1)}]
    (-2.7,1) ellipse (1.2 and .3);
\end{scope}
\end{tikzpicture}
\caption{Possible problems when coupling ellipses model and long-range.
    On the left, ellipses that do not intersect lead to a single connected component of $\xi$-edges; on
    the right, ellipses that do intersect lead to disjoint components of $\xi$-edges.}
\label{fig:coupling_fails}
\end{figure}
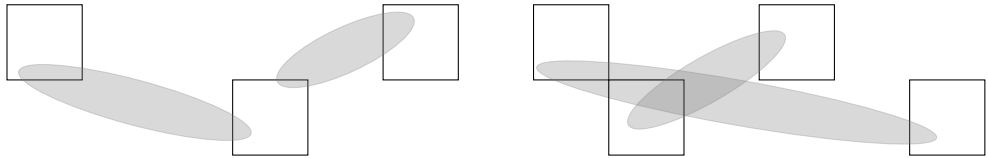

We now present the results about long-range percolation that we use.
We refer the reader
to~\cite[Section~1.3]{biskup2004scaling} for a summary on
the chemical distance for different regimes of $s$.
We will be mainly interested in the case $d=2$ and $s \in (3, 4)$, which
corresponds to ellipses model with $\alpha\in (1,2)$. Results for any $d \geq 2$
and $s \in (d,2d)$ are discussed in
papers~\cite{biskup2004scaling, biskup2011graph, biskup2019sharp}.

Our estimate for Euclidean distance in Theorem~\ref{teo:distance_ellipses_l2}
builds on a construction from~\cite{biskup2004scaling} that relies on the above
mentioned highway property. Let us exemplify this property for the long-range
model with $d \geq 2$ and $s \in (d, 2d)$.  Denote ${|x-y| = N}$ and for
$\gamma \in \bigl( s/2d, 1 \bigr)$ consider $\ZB = \ZZ^{d} \cap [-N^{\gamma}/2,
N^{\gamma}/2]^d$. Then, the probability of the event $\{\ZB_x
\leftrightarrow_1 \ZB_y\}$ that there is an open edge connecting a site in
$\ZB_x = x + \ZB$ to another site in $\ZB_y = y + \ZB$ can be estimated using 
\begin{equation}
\label{eq:highway}
\PP\bigl(\{\ZB_x \leftrightarrow_1 \ZB_y\}^{\comp}\bigr)
    = \prod_{\mathclap{x' \in \ZB_x,y' \in \ZB_y}} (1 - p_{x'y'})
    = \exp\biggl[
            - \frac{\beta |\ZB_x| |\ZB_y|}{\bigl(N + O(N^{\gamma})\bigr)^{s}}
        \biggr] 
    \sim e^{- \beta N^{2d\gamma - s}}
\end{equation}
as $N \to \infty$. 
The estimate in~\eqref{eq:highway} is in the core of the hierarchical
construction from~\cite{biskup2004scaling} which leads to the main result
therein: the chemical distance between two points $x,y$ on the infinite cluster
behaves asymptotically as
\begin{equation*}
D(x,y) = (\log |x-y|)^{\Delta + o(1)}
\quad \text{as $|x-y| \to \infty$,}
\quad \text{where $\Delta=\frac{\log 2}{\log (2d/s)}$.}
\end{equation*}
This same argument shows that ${\cD(x,y) \sim |x-y|}$, although not mentioned
in~\cite{biskup2004scaling}. We present (a simplified version of) their
hierarchical construction in Section~\ref{sec:bounding_euclidean_distance}, and
use it as a fundamental tool for controlling the Euclidean distance traversed
by a path in ellipses model.

\medskip

As we have seen above, in all three models the Euclidean distance restricted to
the covered set and the unrestricted distance are asymptotically the same. 
The chemical distance can be seen as an alternative measure of connectedness for
these models and through this lens they behave very differently, presenting
different orders of magnitude. 
For Poisson cylinders the chemical distance is
bounded by a constant, for ellipses model it grows as $\log \log |x-y|$, and
for the long-range model it grows as $(\log |x-y|)^{\Delta}$. 
We will see that
this discrepancy between ellipses model and long-range percolation may be
explained as a consequence of the first issue above.

\subsection{Idea of proofs}
\label{sub:idea_of_proofs}

We need to obtain lower and upper bounds on the distance between points $x$ and
$y$ that belong to the same cluster of ellipses.

Our estimates for the lower bounds are simpler to obtain. The lower bound for
the internal distance appearing in~\eqref{eq:distance_ellipses} is simply
the unrestricted distance $|x-y|$ in the plane, and although obvious, we do not
have any improvement for it. The lower bound for the chemical distance
appearing in \eqref{eq:chem_distance_ellipses}, follows from an elementary
induction argument. One could try to improve the bounds using the BK inequality
like in the lower bound in~\cite{biskup2011graph} but our argument seems to
provide the correct order of magnitude in a simpler way.

The proofs for the upper bounds appearing in both
Theorems~\ref{teo:distance_ellipses_l2} and~\ref{teo:chem_distance_ellipses}
follow similar strategies. The first step is to show that, with high
probability, there exists a set of few overlapping ellipses that allows us to
traverse from a local region containing $x$ to another local region containing
$y$ without deviating too much. The second step consists of connecting locally
the points $x$ and $y$ to this structure. This is the content of a
deterministic construction in Lemma~\ref{lema:distance_small_scales}. 

Let us discuss some details of each proof. The proof of
Theorem~\ref{teo:distance_ellipses_l2} is based on a coupling of ellipses model
and site-bond long-range percolation model on a renormalized lattice.  The
probability of an edge being open will be given by the coupling with long-range
percolation described above.  Only a subset of the underlying Poisson point
process defining the ellipses model is used for this coupling.  The remaining
(independent) part is used for defining a site percolation model on a lattice
of renormalized sites that correspond to boxes in the original lattice.
Roughly, a site is considered open (or \textit{good}) if the corresponding box
is good meaning that the cluster of ellipses near this box is sufficiently
well-connected. This definition is based on an idea
from~\cite{antal1996chemical}.

On the event that $x \leftrightarrow y$ in ellipses model, the bond percolation
part and the site percolation part are then combined to create a short path
connecting $x$ to $y$. This is done in two steps:
\begin{description}
\item[Hierarchical construction.] This is essentially the construction
    from~\cite{biskup2004scaling} based on the highway
    property~\eqref{eq:highway}.  
    When $|x-y| = N$ is large, with very
    high probability, there is an open edge connecting small
    neighborhoods around $x$ and $y$.
    This idea can be iterated to build what
    we call a \textit{hierarchy}, see Definition~\ref{defi:hierarchy} and
    Figure~\ref{fig:hierarchy}.  
    In words, a hierarchy is a collection of
    long edges (or \textit{highways}) that essentially connects $x$ to $y$,
    leaving only some gaps that are much shorter than the highways.
    
\item[Gluing procedure.]
    Given that we have found a hierarchy the original problem is then replaced
    by the problem of building connections across the remaining gaps.  For
    that, we use the site percolation part of our coupling.  The definition
    of good boxes will ensure that neighboring good boxes have intersecting
    clusters of ellipses.  Moreover, the renormalization scheme is
    performed so that the probability of a box being good is highly
    supercritical. Therefore, even when a gap that we want to cross has
    some bad boxes around it, we can still contour these bad boxes by
    paying a low price in terms of distance and probability.  This is
    accomplished through a large deviation bound on the size of bad
    clusters, see Section~\ref{sub:gluing_highways}.
\end{description}
After these two steps are completed, we have with high probability a path of
ellipses connecting $x$ and $y$ whose length is well-controlled. This
establishes the upper bound in Theorem~\ref{teo:distance_ellipses_l2}.

The reader who is familiar with the hierarchical construction
of~\cite{biskup2004scaling} and the renormalization procedure
of~\cite{antal1996chemical} may notice that, in our proof of
Theorem~\ref{teo:distance_ellipses_l2}, we define events that are much simpler
than the ones appearing in the original constructions. This is possible
due to the existence of long overlapping ellipses that overlap, a phenomenon
with no counterpart in long-range or Bernoulli percolation.

The proof for the upper bound for the chemical distance in
Theorem~\ref{teo:chem_distance_ellipses} does not rely on the same coupling
with long-range percolation as in Theorem~\ref{teo:distance_ellipses_l2} since
this coupling does not explore the possibility of using long ellipses to its
full potential. 
Instead, our argument involves choosing a rapidly increasing sequence
of rectangles and studying the event that they are crossed in the hardest
direction by a single ellipse.  By a Borel-Cantelli argument, this construction
provides `enhanced highways' that cross large distances more efficiently.

\medskip
\noindent
\textbf{Remarks on the notation.} Throughout the paper we use $c, C$ to denote
generic positive constants that can change from line to line. Numbered
constants $\uc{c:cross_one_ellipse}, \uc{c:annulus_bound},
\uc{c:ellipse_length}$, are kept fixed.
Also, our asymptotic notation uses
\begin{itemize}
\setlength{\itemsep}{1.5pt}
\setlength{\parskip}{0pt}
\setlength{\parsep}{0pt}
\item both $f = o(g)$ and $f \ll g$ to denote
    $\lim_{n \to \infty} \tfrac{f(n)}{g(n)} = 0$;
\item $f = O(g)$ to denote $|f| \le C |g|$ for some constant $C$;
\item $f = \Theta(g)$ to denote $c |g| \le |f| \le C |g|$;
\item $f \sim g$ to denote $\lim_{n \to \infty} \tfrac{f(n)}{g(n)} = 1$.
\end{itemize}

\section{Couplings, highways and hierarchies}
\label{sec:preliminaries}

In this Section we collect some results from the literature that will be used in
the proofs of Theorems~\ref{teo:distance_ellipses_l2}
and~\ref{teo:chem_distance_ellipses}.

\medskip
\noindent
\textbf{Ellipses model.} Ellipses model is defined via a Poisson point process
(PPP) on $\RR^{2} \times \RR^{+} \times (-\pi/2, \pi/2]$ with intensity
measure
\begin{equation}
\label{eq:ellipses_measure}
u \cdot \mathrm{d}z
    \otimes \alpha R^{-(1+\alpha)}\, \mathrm{d}R
    \otimes \frac{1}{\pi}\mathrm{d}V,
\end{equation}
For each point $(z, R, V)$ in the PPP, place an ellipse centered at $z$ whose
minor axis has length $1$ and whose major axis has length $R$ and forms an
angle $V$ with respect to the horizontal direction.  The multiplicative
parameter $u >0$ controls the density of ellipse whereas the exponent  $\alpha
>0$ controls the tail of major axis' distribution.  We refer the reader
to~\cite{teixeira2017ellipses} for an account on the phase transition for
percolation on the covered set with respect to parameters $u$ and $\alpha$.

Define the event $LR_1(l;k)$ that an ellipse crosses the box $[0,l] \times
[0,kl]$ from left to right.  The next lemma uncovers the range of parameters in
which ellipses model presents the highway property:
\begin{lema}[Proposition~5.1 of \cite{teixeira2017ellipses}]
\label{lema:cross_box_one_ellipse}
\newconstant{c:cross_one_ellipse}
Let $\alpha >1$. There is a constant
$\uc{c:cross_one_ellipse} = \uc{c:cross_one_ellipse}(\alpha) > 0$
such that for every $k, l > 0 $ with $lk > 2$
\begin{equation}
\label{eq:cross_box_one_ellipse}
1 - e^{ -\uc{c:cross_one_ellipse}^{-1} u (k \wedge k^{-\alpha}) l^{2 - \alpha}}
    \le \PP(LR_1(l;k))
    \le 1 -
        e^{ -\uc{c:cross_one_ellipse} u
        (k^{2-\alpha}\vee k^{-\alpha}) l^{2 - \alpha}}.
\end{equation}
\end{lema}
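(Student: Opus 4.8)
The plan is to Poissonize and reduce everything to a single two-dimensional integral. The ellipses crossing the box from left to right form a thinning of the defining Poisson point process, so their number $\cN$ is Poisson with mean $\lambda:=\EE[\cN]$ and $\PP(LR_1(l;k))=1-e^{-\lambda}$; hence it is enough to sandwich $\lambda$ between $c^{-1}u(k\wedge k^{-\alpha})l^{2-\alpha}$ and $cu(k^{2-\alpha}\vee k^{-\alpha})l^{2-\alpha}$ for a suitable $c=c(\alpha)>0$. By the intensity measure \eqref{eq:ellipses_measure},
\[
\lambda=\frac{u}{\pi}\int_{-\pi/2}^{\pi/2}\int_{1}^{\infty}\alpha R^{-1-\alpha}\,A(R,V)\,\mathrm{d}R\,\mathrm{d}V,
\]
where $A(R,V)$ is the Lebesgue area of the set of centres $z$ for which the ellipse of major axis $R$ and tilt $V$ centred at $z$ crosses the box. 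So everything reduces to good bounds on $A(R,V)$. (I set up coordinates so that the crossing is along the side of the box that has length $kl$; in the highway regime $\alpha<2$ this is consistent with $\PP\to1$ as $l\to\infty$ for fixed $k$.)

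Next I would describe the admissible centres geometrically. Since an ellipse is convex, it crosses from left to right exactly when it meets both of the two opposite sides $S_0,S_1$ across which we cross — each a segment of length $l$, at distance $kl$ — because any chord joining a point of $E+z$ on $S_0$ to a point on $S_1$ lies inside both $E+z$ and the box. Using $E=-E$, the admissible set is $W_0\cap W_1$ with $W_i:=S_i\oplus E$ a Minkowski sum. Each $W_i$ is a convex ``thickened segment'', but since the minor axis of $E$ equals $1$, it is in fact a thin slanted band: its extent in the crossing direction is $d:=\sqrt{R^2\cos^2V+\sin^2V}$, while every chord of $E$ transverse to that direction has length at most $1/|\cos V|$ (the longest transverse chord equals $R/d$ and $d\ge R|\cos V|$). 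Consequently $W_0\cap W_1$ lies in a slab of width $(d-kl)_+$ in the crossing direction whose transverse cross-sections have length at most $l-kl|\tan V|+2/|\cos V|$ (the middle term is the transverse drift of the band over the crossing length $kl$), so
\[
A(R,V)\ \le\ (d-kl)_+\bigl(l-kl|\tan V|+\tfrac{2}{|\cos V|}\bigr)_+\ \le\ \bigl(R|\cos V|+1-kl\bigr)_+\bigl(l+\tfrac{2}{|\cos V|}\bigr).
\]
For the matching lower bound I would restrict to $|V|\le\tfrac13\min(1,k^{-1})$ and $R\in[3kl,4kl]$ and position the major axis so that the portion of it lying over the crossing strip stays inside the box; this yields an explicit parallelogram of admissible centres of area $\gtrsim kl^2$, hence $A(R,V)\gtrsim kl^2$ on that family.

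Then I would carry out the integral. For the upper bound, insert the estimate above: the inner $R$-integral, after substituting $R=\rho/|\cos V|$ and using $kl>2$ to stay inside the power-law tail, is at most $C_\alpha|\cos V|^\alpha(kl)^{1-\alpha}$; the remaining integral $\int_{-\pi/2}^{\pi/2}|\cos V|^\alpha\bigl(l+2/|\cos V|\bigr)\,\mathrm{d}V$ is at most $C_\alpha(l+1)$ because $\alpha>1$ makes $|\cos V|^\alpha$ and $|\cos V|^{\alpha-1}$ integrable; and $(kl)^{1-\alpha}(l+1)\le 2(k^{2-\alpha}\vee k^{-\alpha})l^{2-\alpha}$ whenever $kl>2$, by a short check of the cases $k\le1$ and $k\ge1$. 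This gives $\lambda\le C_\alpha u(k^{2-\alpha}\vee k^{-\alpha})l^{2-\alpha}$. For the lower bound, integrating the explicit family gives $\lambda\gtrsim u\min(1,k^{-1})(kl)^{-\alpha}\cdot kl^2=u\,k^{1-\alpha}l^{2-\alpha}\min(1,k^{-1})$, and $k^{1-\alpha}\min(1,k^{-1})\ge\tfrac13(k\wedge k^{-\alpha})$ in both cases; absorbing all $\alpha$-dependent constants into $c$ completes the argument.

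I expect the geometric estimate of $A(R,V)$ to be the only real obstacle. The crude enclosing-disc bound $A(R,V)=O(R^2)$ makes $\int\alpha R^{-1-\alpha}A(R,V)\,\mathrm{d}R$ diverge precisely when $\alpha<2$, the regime we care about, so it is essential to use that a crossing ellipse — having minor axis $1$ — occupies a thin band; this is what forces both the factor $(d-kl)_+$ (so $R$ must be of order $kl/|\cos V|$) and the bounded transverse thickness $O(1/|\cos V|)$. Getting the transverse chord bound and the drift term right, and carefully tracking the $(\cdot)_+$ truncations — which is what makes the conclusion a (non-sharp) sandwich $k\wedge k^{-\alpha}\le\cdots\le k^{2-\alpha}\vee k^{-\alpha}$ rather than an asymptotic — is the actual work; the remaining integration is routine.
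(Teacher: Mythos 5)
The paper does not prove this lemma; it is imported directly from Proposition~5.1 of \cite{teixeira2017ellipses}, so there is no in-text proof to compare against. Judged on its own terms, your Poissonization-plus-geometry route is the natural one and is essentially correct. You reduce $\PP(LR_1)=1-e^{-\lambda}$ to bounding the mean $\lambda=\frac{u}{\pi}\int\!\!\int\alpha R^{-1-\alpha}A(R,V)\,\mathrm{d}R\,\mathrm{d}V$, and the decisive geometric observation --- that the admissible-centre set $W_0\cap W_1$ is a thin band of width $\Theta\bigl((R|\cos V|-kl)_+\bigr)$ in the crossing direction with transverse sections of length $l+O(1/|\cos V|)$, because a crossing ellipse has bounded minor axis --- is exactly what makes the $R$-integral converge for $\alpha\in(1,2)$ (the crude $A=O(R^2)$ diverges there). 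The $R$-integral then gives $\Theta\bigl(|\cos V|^{\alpha}(kl)^{1-\alpha}\bigr)$, the $V$-integral is finite since $\alpha>1$, and your case-check for $k\geq 1$ versus $k\leq 1$ using $kl>2$ reproduces the stated sandwich; the lower-bound family $R\in[3kl,4kl]$, $|V|\leq\tfrac13\min(1,k^{-1})$ with $A\gtrsim kl^2$ likewise yields $k^{1-\alpha}\min(1,k^{-1})\geq k\wedge k^{-\alpha}$. Up to the usual bookkeeping of full versus half major axis (a global factor of $2$) the argument is sound.

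One point merits a sharper justification than you gave. You read the crossing as being across the side of length $kl$, whereas the notation $LR_1(l;k)$ ``left to right'' for the box $[0,l]\times[0,kl]$ most naturally suggests crossing the side of length $l$. Your reading is the correct one, but the reason you offer (``consistent with $\PP\to1$'') does not distinguish the two, since both give $\PP\to1$ as $l\to\infty$. What actually forces the interpretation is the shape of the bounds themselves: if the crossing were across length $l$ with transverse window of height $kl$, then for $k\geq 1$ the Poisson mean is of order $u\,k\,l^{2-\alpha}$, which for $\alpha>1$ exceeds the claimed upper bound $\Theta(u\,k^{2-\alpha}l^{2-\alpha})$. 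It is also the only reading consistent with the downstream application: crossing $[0,l_n]\times[0,l_{n-1}]$ along $l_n$ with $l=l_{n-1}$, $kl=l_n$, $k>1$ gives the lower-bound exponent $u\uc{c:cross_one_ellipse}^{-1}k^{-\alpha}l^{2-\alpha}=u\uc{c:cross_one_ellipse}^{-1}l_n^{-\alpha}l_{n-1}^{2}$, exactly as used in the proof of Proposition~\ref{prop:chem_dist_up}. You should make this check explicit rather than appeal to a criterion that both readings pass.
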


Therefore, when $\alpha \in (1,2)$ and $k$ is fixed, we have $\PP(LR_1(l;k))
\to 1$ as $l \to \infty$, showing that the highway property holds in this range
of $\alpha$.

A second useful estimate is a similar bound for the probability that there is
an ellipse that traverses an annulus. Let $B(l)$ denote the Euclidean ball of
radius $l$ centered at the origin in $\RR^{2}$ and denote its boundary by
$\partial B(l)$. For two disjoint regions $A_1$ and $A_2$ write $A_1
\leftrightarrow_1 A_2$ if there is one ellipse that intersects both $A_1$ and
$A_2$.  We have:
\begin{lema}
\label{lema:annulus_bound}
\newconstant{c:annulus_bound}
Let $\alpha > 1$. 
There is a constant $\uc{c:annulus_bound} = \uc{c:annulus_bound}(\alpha) > 0$
such that for every $l_1$, $l_2$ with $l_2 - l_1 \geq 2$ and $l_1 \geq 1$
we have
\begin{equation}
\label{eq:annulus_bound}
\PP (B(l_1) \leftrightarrow_1 \partial B(l_2))
    \le 1 - \exp\bigl[ - u \uc{c:annulus_bound} l_1
        \cdot (l_2 - l_1)^{1-\alpha}\bigr].
\end{equation}
\end{lema}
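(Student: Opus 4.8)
Since the event $\{B(l_1)\leftrightarrow_1\partial B(l_2)\}$ requires a \emph{single} ellipse to meet both sets, the Poisson structure of the model gives $\PP(B(l_1)\leftrightarrow_1\partial B(l_2)) = 1 - e^{-\Lambda}$, where $\Lambda = \Lambda(l_1,l_2)$ is the value of the intensity measure~\eqref{eq:ellipses_measure} on the set $\cE$ of triples $(z,R,V)$ for which the corresponding ellipse intersects both $B(l_1)$ and $\partial B(l_2)$. As $x\mapsto 1-e^{-x}$ is increasing, it suffices to show $\Lambda \le u\,\uc{c:annulus_bound}\, l_1\,(l_2-l_1)^{1-\alpha}$ for a suitable constant $\uc{c:annulus_bound}=\uc{c:annulus_bound}(\alpha)$.

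The first step is a deterministic geometric reduction. Because the minor axis has length $1$, the ellipse with parameters $(z,R,V)$ contains its major axis --- a segment $S$ of length $R$, direction $V$, through $z$ --- and is in turn contained in the $\tfrac12$-neighbourhood of $S$. Hence, if the ellipse meets $B(l_1)$ and $\partial B(l_2)$, then $S$ meets $B(l_1+\tfrac12)$ and $S$ has a point at Euclidean distance $\ge l_2-\tfrac12$ from the origin. So it is enough to bound the intensity measure of the set of segments with these two properties.

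The second step is the main computation. Parametrise a segment of length $R$ in direction $V$ by $(V,d,t)$, where $d$ is the signed distance from the origin to the line carrying $S$ and $t$ is the signed position of its midpoint along that line measured from the foot of the perpendicular; for fixed $V$ the map $z\leftrightarrow(d,t)$ has unit Jacobian. The requirement that $S$ meet $B(l_1+\tfrac12)$ forces $|d|\le l_1+\tfrac12$ and $|t|\le R/2 + \sqrt{(l_1+\tfrac12)^2-d^2}$; the requirement that $S$ reach distance $\ge l_2-\tfrac12$ forces $|t|\ge \sqrt{(l_2-\tfrac12)^2-d^2} - R/2$. Subtracting and using $\sqrt{b^2-d^2}-\sqrt{a^2-d^2}\ge b-a$ (valid for $0\le a\le b$, $|d|\le a$), the admissible set of $t$ has length at most $2\bigl(R-(l_2-l_1-1)\bigr)_{+}$, uniformly in $V$ and $d$. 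Therefore
\begin{equation*}
\Lambda \;\le\; u\int_{-\pi/2}^{\pi/2}\frac{\mathrm{d}V}{\pi}
  \int_{\{|d|\le l_1+\frac12\}}\mathrm{d}d
  \int_{0}^{\infty} 2\bigl(R-(l_2-l_1-1)\bigr)_{+}\,\alpha R^{-(1+\alpha)}\,\mathrm{d}R .
\end{equation*}
The $V$-integral contributes $1$, the $d$-integral contributes $2l_1+1\le 3l_1$, and the remaining $R$-integral equals $\tfrac{2(l_2-l_1-1)^{1-\alpha}}{\alpha-1}$. Since $l_2-l_1\ge 2$ we have $l_2-l_1-1\ge\tfrac12(l_2-l_1)$, so $(l_2-l_1-1)^{1-\alpha}\le 2^{\alpha-1}(l_2-l_1)^{1-\alpha}$; collecting constants gives $\Lambda\le\tfrac{3\cdot 2^{\alpha}}{\alpha-1}\,u\,l_1\,(l_2-l_1)^{1-\alpha}$, which is the claim with $\uc{c:annulus_bound}=\tfrac{3\cdot 2^{\alpha}}{\alpha-1}$.

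The only subtle point is obtaining a bound that is \emph{linear} in $l_1$. If one kept only the constraint that $S$ pass near the origin (so $t$ ranges over an interval of length of order $R$), the $d$- and $t$-integrations would contribute a term of order $l_1^{2}(l_2-l_1)^{-\alpha}$, which is far too large once $l_1$ is big. It is the additional constraint that the same segment also reach all the way to $\partial B(l_2)$ that collapses the longitudinal range of $t$ to an interval of length of order $(R-(l_2-l_1))_{+}$ and produces the correct $\Theta(l_1)$ dependence. A minor edge case --- when the lower bound for $|t|$ obtained above is negative --- has to be checked separately, but it is handled by the same elementary inequality.
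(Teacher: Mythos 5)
Your proof is correct, and it is worth noting that it is genuinely different in kind from what the paper does: the paper simply cites Lemma~6.1 of~\cite{teixeira2017ellipses} (where the intensity of the set $\Gamma_{12}$ of annulus-crossing ellipses is estimated), whereas you reconstruct a self-contained computation. The geometric reduction to the major-axis segment $S$ (ellipse sandwiched between $S$ and its $\tfrac12$-neighbourhood), the rotated $(V,d,t)$ parametrisation with unit Jacobian, and the key observation that simultaneously meeting $B(l_1+\tfrac12)$ and reaching $\partial B(l_2-\tfrac12)$ forces $t$ into a set of length at most $2(R-(l_2-l_1-1))_{+}$ via $\sqrt{b^2-d^2}-\sqrt{a^2-d^2}\ge b-a$, are all sound, and the remaining one-dimensional integrals ($V$-integral $=1$, $d$-integral $\le 3l_1$, $R$-integral $=\tfrac{2(l_2-l_1-1)^{1-\alpha}}{\alpha-1}$) are computed correctly, yielding $\Lambda \le \tfrac{3\cdot 2^{\alpha}}{\alpha-1}\,u\,l_1(l_2-l_1)^{1-\alpha}$. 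You also correctly flag and handle the edge case where the lower constraint on $|t|$ is vacuous, and you correctly identify the role of the joint constraint in producing the crucial linear (rather than quadratic) dependence on $l_1$. What your self-contained approach buys is transparency and a concrete value of $\uc{c:annulus_bound}$; what the paper's citation buys is brevity. One small caveat: the paper is not entirely consistent about whether $R$ is the full major-axis length or the semi-length (the change-of-variables passage treats it as a half-length), but this ambiguity only affects the value of the constant by a fixed factor and not the structure or validity of your argument.
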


\begin{proof}
See~\cite[Lemma~6.1]{teixeira2017ellipses}.
The estimate for $\mu(\Gamma_{12})$ implies~\eqref{eq:annulus_bound}.
\end{proof}

\medskip
\noindent
\textbf{Coupling long-range with continuous model.}
There is a canonical coupling mentioned in~\cite{newman1986one}
and used in~\cite{biskup2019sharp} between long-range percolation
model and a Poisson Point Process $\xi$ on $\RR^d \times \RR^d$ with
intensity measure
\begin{equation}
\label{eq:cont_long_rang_inte}
\mu_{\beta, s} := \frac{\beta}{|x-y|^{s}} \,\mathrm{d} x \,\mathrm{d} y.
\end{equation}
We may interpret each point $(x, y) \in \xi$ as giving rise to a segment connecting $x$ and $y$.
This coupling is useful to make the renormalization scaling more transparent. 
In fact, for $a > 0$ if  ${\xi' := \{(ax,ay);\; (x,y) \in \xi\}}$ then the intensities of
$\xi$ and $\xi'$ are related by
\begin{equation}
\label{eq:scaling_mu}
\mu_{\beta}(\mathrm{d} x' \,\mathrm{d} y')
    = \frac{\beta}{|x'-y'|^s} \mathrm{d} x' \,\mathrm{d} y'
    = \frac{\beta}{a^s |x-y|^s} \, a^d \mathrm{d} x \, a^d \mathrm{d} y
    = \mu_{\beta a^{2d-s}}(\mathrm{d} x \,\mathrm{d} y).
\end{equation}
This scaling property is behind the highway property in case $s \in (d, 2d)$,
since the intensity appearing on the right-hand side tends to infinity as $a$
grows. Also notice that when $s=2d$ the model is scale-invariant and there is
no hope that a similar property is satisfied in that case.

For disjoint regions $A_1$ and $A_2$ we write $A_1 \sim_{\xi} A_2$ and say that
$A_1$ and $A_2$ are connected if there is $(\tilde{x}, \tilde{y}) \in (A_1
\times A_2) \cap \xi$.  Take $B:=[-1/2, 1/2)^{d}$ and for $x \in \ZZ^{d}$
consider $B_x = x + B$.  We say that two sites $x \neq y \in \ZZ^{d}$ are
$\xi$-connected if $B_x \sim_{\xi} B_y$ and denote this event by $x \sim_{\xi}
y$.

Lemma~\ref{lema:continuous_coupling} below  yields estimates on the probability
of connecting two distant boxes and shows that this coupling indeed produces a
long-range percolation model. 

\begin{lema}[Connecting boxes]
\label{lema:continuous_coupling}
Let $B(l) = [-l/2,l/2]^{d}$ and $z \in \RR^{d}$ and $B_z(l)= z+B(l)$.
We have that $\PP(B(l) \sim_{\xi} B_z(l)) = 1$ if and only if 
$B(l) \cap B_z(l) \neq \varnothing$. Moreover, we have
\begin{equation}
\label{eq:continuous_coupling}
\PP(B(l) \sim_{\xi} B_z(l)) \sim \beta l^{2d} |z|^{-s}
\quad \textup{as $z \to \infty$}.
\end{equation}
\end{lema}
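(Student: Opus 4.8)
The plan is to reduce everything to the defining property of a Poisson point process: the number of points of $\xi$ landing in a fixed Borel subset of $\RR^d \times \RR^d$ is Poisson-distributed with parameter equal to the $\mu_{\beta,s}$-measure of that set.

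The first step is to rewrite the event. Since $B(l) \sim_\xi B_z(l)$ means precisely that $\xi$ has at least one point in the product box $B(l) \times B_z(l)$, the probability in question equals $1 - \exp[-\Lambda(z)]$, where $\Lambda(z) := \mu_{\beta,s}(B(l) \times B_z(l)) = \int_{B(l)}\int_{B_z(l)} \beta |x-y|^{-s}\,\mathrm{d}y\,\mathrm{d}x \in (0,\infty]$. Hence the first assertion is equivalent to the claim that $\Lambda(z) = \infty$ if and only if the two boxes overlap in a set of positive Lebesgue measure. For one direction I would note that if $B(l)\cap B_z(l)=\varnothing$ then compactness gives $\eta := \dist(B(l),B_z(l))>0$, so the kernel is bounded by $\beta\eta^{-s}$ on the region of integration and $\Lambda(z)\le \beta\eta^{-s} l^{2d}<\infty$. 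For the other direction I would change variables to $(x,w)=(x,x-y)$, in which $\mathrm{d}\mu_{\beta,s}=\beta|w|^{-s}\,\mathrm{d}x\,\mathrm{d}w$; an overlap of positive volume forces $|w|^{-s}$ to be integrated over a whole neighbourhood of $w=0$ for a positive-measure set of $x$'s, and since $s>d$ this diverges, so $\Lambda(z)=\infty$. I would add the remark that, with the half-open convention for $B$ used in the coupling, the condition ``$B(l)\cap B_z(l)\ne\varnothing$'' is exactly ``the boxes overlap with positive volume'', which is anyway the only case relevant in the sequel.

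The second step is the asymptotics as $|z|\to\infty$. For $x\in B(l)$ and $y\in B_z(l)$ one has $|x|\le\frac{\sqrt d}{2}\,l$ and $|y-z|\le\frac{\sqrt d}{2}\,l$, hence $\bigl||x-y|-|z|\bigr|\le\sqrt d\,l$, so $|x-y|^{-s}=|z|^{-s}\bigl(1+O(l/|z|)\bigr)$ uniformly over the two boxes. Integrating over $B(l)\times B_z(l)$, whose Lebesgue measure is $l^{2d}$, gives $\Lambda(z)=\beta\, l^{2d}\,|z|^{-s}\bigl(1+O(l/|z|)\bigr)$, which tends to $0$; then the expansion $1-e^{-\Lambda}=\Lambda+O(\Lambda^2)$ for small $\Lambda$ yields $\PP(B(l)\sim_\xi B_z(l))\sim\beta\, l^{2d}\,|z|^{-s}$.

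I do not foresee a serious obstacle here: the computation of $\Lambda(z)$ and its asymptotics are routine estimates with the intensity measure, and the only point needing a little care is the dichotomy in the first claim — namely recognizing that the diagonal singularity of $|x-y|^{-s}$ is non-integrable in $\RR^{2d}$ precisely because $s>d$, so that the integral blows up exactly when the boxes overlap.
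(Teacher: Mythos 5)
Your proof is correct and follows essentially the same route as the paper: express the probability as $1-\exp[-\Lambda(z)]$ with $\Lambda(z)=\mu_{\beta,s}(B(l)\times B_z(l))$, compute $\Lambda(z)$ asymptotically by noting $|x-y|^{-s}=|z|^{-s}(1+O(l/|z|))$ uniformly on the product box, and expand $1-e^{-\Lambda}$ for small $\Lambda$.

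One genuine improvement worth flagging: your treatment of the first dichotomy is more careful than the paper's. The paper asserts that when $|z|_\infty = l$ (boxes touching only along a boundary), ``the fact that the boxes must share at least a corner will imply the integral diverges for $s\in(d,2d)$,'' but this is false as stated. If the boxes share only a corner in $\RR^2$ (say $z=(l,l)$), a change of variables near the shared corner gives an integral $\sim\int r^{2d-1-s}\,dr$, which is finite for all $s<2d$, so $\PP(B(l)\sim_\xi B_z(l))<1$; the integral only diverges in the touching case if the shared face has high enough dimension relative to $s$. You correctly identify the right dichotomy, namely that $\Lambda(z)=\infty$ iff the boxes overlap in a set of positive Lebesgue measure, with divergence coming from the non-integrable diagonal singularity of $|w|^{-s}$ for $s>d$, and you observe that under the half-open convention actually used in the coupling, ``non-empty intersection'' and ``positive-volume overlap'' coincide. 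Since the touching case never arises in the sequel this discrepancy is harmless, but the lemma's ``if and only if'' should really be read with the half-open convention in mind, and your phrasing makes this precise where the paper glosses over it.
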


\begin{proof}
We begin by noticing that for $x \in B(l)$ and $y \in B_z(l)$
we have that
\begin{equation*}
|x-y|
    \geq |-z + x + (z-y)|_{\infty}
    \geq |z|_{\infty} - l.
\end{equation*}
Thus, when $|z|_{\infty} > l$ we  can write
\begin{align*}
\PP\bigl(B(l) \sim_{\xi} B_z(l)\bigr)
    &= 1 - \exp\Bigl[
        - \beta \int_{B(l)\times B_z(l)}
            \hspace{-6mm}|x-y|^{-s} \,\mathrm{d}x \,\mathrm{d}y
        \Bigr] \\
    &\le 1 - \exp\Bigl[ - \beta(|z|_{\infty} - l)^{-s} \cdot l^{2d} \Bigr]
    < 1
\end{align*}
and as $|z| \to \infty$ we have
$|x-y|^{-s} = |z|^{-s} \bigl(1 + O(|z|^{-1})\bigr)$, implying
\begin{equation*}
\PP(B(l) \sim_{\xi} B_z(l))
    = 1 - \exp\Bigl[
        - |z|^{-s} \bigl(\beta + O(|z|^{-1})\bigr) \cdot l^{2d}
        \Bigr]
    \sim \beta l^{2d} |z|^{-s}
\end{equation*}
with implied constants depending on $d$, $s$ and $l$. Also, when
$|z|_{\infty}=l$ one can verify that $\PP(B(l) \sim_{\xi} B_z(l))=1$.
The fact that boxes $B(l)$ and $B_z(l)$ must share at least a corner will
imply the integral diverges for $s \in (d, 2d)$.
\end{proof}

Also, if we restrict our intensity measure to only allow for segments whose
lengths are larger than some fixed value, say $\mu_{\beta, s} :=
{\beta}{|x-y|^{-s}} \I_{|x-y|> \kappa}\,\mathrm{d} x \,\mathrm{d} y$ we
get a model in which nearest neighbors are no longer connected with probability
1, but that has the same behavior on long edges.

\medskip
\noindent
\textbf{Change of variables and ellipses model.}
Now, let us restrict ourselves to the case $d=2$. 
Here we use a change of variables to verify that the PPP's with intensity
measures \eqref{eq:cont_long_rang_inte} and~\eqref{eq:ellipses_measure}
may be viewed as reparametrizations of each other.

Instead of parametrizing a line segment in $\mathbb{R}^2$ specifying its
endpoints $x$ and $y$, we can use its middle point $z = (z_1, z_2)$, its radius
$R$ and the angle if forms with a given direction, $V$.  This change of
variables is given by 
$\Psi: \RR^2 \times \RR^{+} \times [-\tfrac{\pi}{2}, \tfrac{\pi}{2})
\to \RR^{4}$ such that
\begin{equation*}
\Psi(z_1, z_2, R, V)
    = (z_1 + R \cos V, z_2 + R \sin V, z_1 - R \cos V, z_2 - R \sin V).
\end{equation*}
It is straightforward to check that the Jacobian matrix $J$ satisfies
\begin{equation*}
J = 
{\footnotesize
\begin{bmatrix}
\begin{array}{rrrr}
    1 & 0 &   \cos V & - R \sin V \\
    0 & 1 &   \sin V &   R \cos V \\
    1 & 0 & - \cos V &   R \sin V \\
    0 & 1 & - \cos V & - R \cos V
\end{array}
\end{bmatrix}
}
\quad \text{and} \quad
\det J = 4R.
\end{equation*}

Therefore, for any measurable $A \subset
\RR^4$
\begin{align*}
\int_{A} \frac{\beta}{|x-y|^s} \,\mathrm{d} x \,\mathrm{d} y
    &= \int_{\Psi^{-1}(A)} \frac{\beta}{(2R)^s} \cdot (4R)
        \,\mathrm{d} z \,\mathrm{d} R \,\mathrm{d} V \\
    &= \int_{\Psi^{-1}(A)} \frac{4 \beta}{2^s} \cdot R^{1-s}
        \,\mathrm{d} z \,\mathrm{d} R \,\mathrm{d} V.
\end{align*}
The usual parametrization of ellipses percolation is based on measure
\begin{equation*}
    u \,\mathrm{d} z \otimes
    \alpha R^{-(1+\alpha)} \,\mathrm{d} R \otimes
    \frac{\mathrm{d} V}{\pi}.
\end{equation*}
Comparing these measures, we obtain that we can
relate parameters $\beta, s$ used in the endpoint parametrization with the
$u, \alpha$ parametrization of ellipses model, which leads
to the relations in~\eqref{eq:parameters_relation}:
\begin{equation*}
s= 2+ \alpha
\quad \text{and} \quad
u = \frac{\pi \beta}{\alpha 2^{\alpha}}.
\end{equation*}
Using relation~\eqref{eq:parameters_relation} we see that
Lemma~\ref{lema:continuous_coupling} can also be used to estimate connection
probabilities on ellipses model.

\medskip
\noindent
\textbf{Hierarchical construction.} Consider long-range percolation on
$\ZZ^{d}$ with parameters $\beta$ and $s$. The highway property
in~\eqref{eq:highway} ensures that, for fixed $\gamma \in (s/2d,1)$ and
$|x-y| =: N$ large, there is an open edge connecting points in
neighborhoods of size $N^\gamma$ around $x$ and $y$ with high probability.
This idea can be iterated to build what we call a \textit{hierarchy}, see
Definition~\ref{defi:hierarchy} and Figure~\ref{fig:hierarchy}.  When a
hierarchy exists the problem of finding a path from $x$ to $y$ can be replaced
by finding paths between well-separated pairs of points that are however much
closer than the original pair $(x,y)$.  This construction, introduced
by~\cite{biskup2004scaling}, is reproduced below. 

We use $\sigma \in \{0,1\}^{k}$ to encode the leaves of a binary tree of depth
$k$, by considering that $\varnothing$ is the root vertex, and $0$ and $1$
denote the left and right children of $\varnothing$, respectively.  We append
digits to the right of a word $\sigma \in \{0,1\}^k$ in order to create longer
words, e.g., $\sigma1 \in \{0,1\}^{k+1}$ is the word that encodes the right
child of $\sigma$.

\begin{defi}[Hierarchy]
\label{defi:hierarchy}
For $n \geq 1$ and $x, y \in \ZZ^{d}$ we say that a collection
\begin{equation}
\cH_n(x, y)
    = \{(z_{\sigma});
        \sigma \in \big\{0,1\}^{k}, 1 \le k \le n,
        z_{\sigma} \in \ZZ^{d}\big\}
\end{equation}
is a \textit{hierarchy of depth $n$} if 
\begin{enumerate}
\setlength{\itemsep}{1pt}
\setlength{\parskip}{0pt}
\item $z_0 = x$ and $z_1 = y$.
\item For all $0 \le k \le n-2$ and all $\sigma \in \{0,1\}^{k}$$, z_{\sigma 00} = z_{\sigma 0}$ and $z_{\sigma 11} = z_{\sigma 1}$.
\item For all $0 \le k \le n-2$ and all $\sigma \in \{0,1\}^{k}$ with
    $z_{\sigma 01} \neq z_{\sigma 10}$ the edge
    $(z_{\sigma 01}, z_{\sigma 10})$ is open.
\item Each edge $(z_{\sigma 01}, z_{\sigma 10})$ as in 3.\ appears
    exactly once in $\cH_k(x, y)$.
\end{enumerate}
\end{defi}

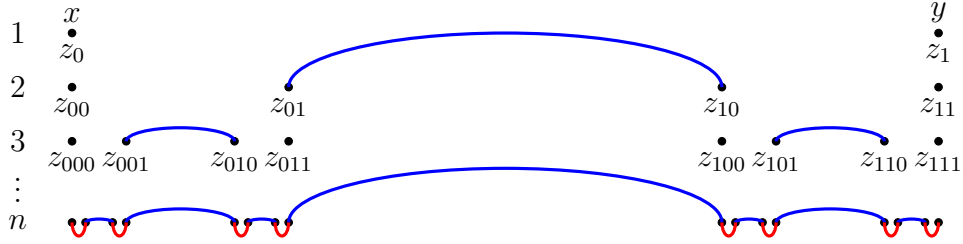
\begin{figure}
\centering
\begin{tikzpicture}[scale=.18,
    dot/.style={
        draw,circle,minimum size=1mm,inner sep=0pt,outer
        sep=0pt,fill=black},
    highway/.style={blue, very thick},
    gap/.style={red, very thick, out=-90, in=-90}
    ]
    \node at (-4, 0) {$1$};
    \coordinate [dot] (0) at ( 0,0)
        node at (0) [below] {$z_{0}$}
        node at (0) [above] {$x$};
    \coordinate [dot] (1) at (64,0)
        node at (1) [below] {$z_{1}$}
        node at (1) [above] {$y$};
    \node at (-4, -4) {$2$};
    \foreach \zlabel/\pos in {00/0, 01/16, 10/48, 11/64}{
        \coordinate [dot] (\zlabel) at ( \pos,-4)
            node at (\zlabel) [below] {$z_{\zlabel}$};
    }
    \node at (-4,-8) {$3$};
    \foreach \zlabel/\pos in {
        000/0, 011/16, 100/48, 111/64,
        001/4, 010/12, 101/52, 110/60
        }{
        \coordinate [dot] (\zlabel) at ( \pos,-8)
            node at (\zlabel) [below] {$z_{\zlabel}$};
    }
    \node at (-4,-11) {$\vdots$};
    \node at (-4,-14) {$n$};
    \foreach \pos in {
        0, 16, 48, 64,
        4, 12, 52, 60,
        1, 15, 49, 63,
        3, 13, 51, 61
        }{
        \node [dot] at ( \pos,-14) {};
    }

    \foreach \ipos/\fpos in {
        1/3, 4/12, 13/15, 16/48, 49/51, 52/60, 61/63
    }{
        \draw[highway] (\ipos, -14)
            arc (180:0:{(\fpos - \ipos)/2} and {(\fpos - \ipos)/8});
    }
    \draw[highway] (01) arc (180:0:16 and 4);
    \draw[highway] (001) arc (180:0:4 and 1);
    \draw[highway] (101) arc (180:0:4 and 1);

    \foreach \ipos in {
        0, 3, 12, 15, 48, 51, 60, 63
    }{
        \draw[gap] (\ipos, -14) arc (-180:0:.5 and 1);
    }
\end{tikzpicture}
\caption{Hierarchy $\cH_n(x, y)$ provides a collection of highways connecting
all pairs $(z_{\sigma 01}, z_{\sigma 10})$ with $\sigma \in \{0,1\}^{n-2}$. To
ensure $x$ is connected to $y$ it suffices to connect the remaining $2^{n-1}$
gaps, that are either of the form $(z_{\sigma 00}, z_{\sigma 01})$ or
$(z_{\sigma 10}, z_{\sigma 11})$.}
\label{fig:hierarchy}
\end{figure}

Note that the definition of a hierarchy does not take into account the distances between the points $z_{\sigma}$.

It will be useful to think of hierarchies as being constructed successively.
In view of the computation in \eqref{eq:highway}, in the first step we may try
to link a pair of sites $z_{01}$ and $z_{10}$ that belong to neighborhoods of
size roughly $N^{\gamma}$ around $z_0$ and $z_1$, respectively (recall that we
are assuming $\gamma \in (s/2d, 1)$ as in the paragraph
above~\eqref{eq:highway}).  Having succeeded to do so in the first $k$ steps,
for each $\sigma \in \{0,1\}^k$ we try to link $z_{\sigma 01}$ and $z_{\sigma
10}$ belonging to neighborhoods of size roughly $ N^{\gamma^k}$ around
$z_{\sigma 0}$ and $z_{\sigma 1}$ respectively. Ideally, when we reach depth
$n$ we will be left with $2^{n-1}$ gaps which are pairs of sites of type
$(z_{\sigma 00}, z_{\sigma 01})$ or $(z_{\sigma 10}, z_{\sigma 11})$ with sites
in each pair at a distance of order $N^{\gamma^{n}}$.  The reader may
consult Figure \ref{fig:hierarchy} for an illustration of this iterative
procedure.  Note that, by the discrete nature of the long-range model the
procedure   cannot be iterated indefinitely.

The above discussion motivates the definition of the event $\cB_n(x, y)$ that
there is a hierarchy $\cH_n(x,y)$ of depth $n$ satisfying that, for all $0 \le
k \le n-2$ and all $\sigma \in \{0,1\}^{k}$
\begin{equation}
\label{eq:cB_n_definition}
|z_{\sigma 01} - z_{\sigma 00}|_{\infty}
\quad \text{and} \quad
|z_{\sigma 10} - z_{\sigma 11}|_{\infty}
\quad \text{belong to $\Bigl[\tfrac{1}{2} N_{k+1}, N_{k+1}\Bigr]$,}
\end{equation}
where $N_k:= N^{\gamma^k}$. The following lemma is a simplified version
of~\cite[Lemmas~4.2 and 4.3]{biskup2004scaling} and provides appropriate
choices of parameters so that the above idealized picture is achieved with high
probability.
\begin{lema}[Hierarchy]
\label{lema:hierarchy}
Fix $\varepsilon > 0$ and $\gamma \in \bigl(\frac{s}{2d}, 1\bigr)$. 
For $x, y \in \ZZ^{d}$ and $N := |x-y|$, let $n \in \NN$ be the greatest
positive integer such that 
\begin{equation}
n \log (1/\gamma) \le \log^{(2)} N - \varepsilon \log^{(3)} N.
\label{eq:relation_n_N}
\end{equation}
There is $N'(\varepsilon, \gamma, d)$ and
$b = b(d) \in (0,1)$ such that if $N \geq N'$ then for any hierarchy
$\cH_n (x,y)$ satisfying~\eqref{eq:cB_n_definition} we have
\begin{equation}
\label{eq:highway_dist_bounds}
\forall\, 0 \le k \le n-2, \text{ and } \sigma \in \{0,1\}^{k},\,\,\,
|z_{\sigma 01} - z_{\sigma 10}| \in [b N_{k}, b^{-1} N_{k}].
\end{equation}
Moreover, there is a positive constant $c = c(\beta, d, s)$ such that
\begin{equation}
\label{eq:prob_cB_n_bound}
\PP\bigl(\cB_n(x,y)^{\comp}\bigr)
    \le 2^{n-1} \cdot e^{ - c N_n^{(2d\gamma - s)} }
    \le \exp\bigl[ - c e^{(2d\gamma - s) (\log^{(2)} N)^{\varepsilon}} \bigr].
\end{equation}
\end{lema}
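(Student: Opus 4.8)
The plan is to build the hierarchy greedily along a breadth-first traversal of the binary tree, invoking the highway estimate~\eqref{eq:highway} once per internal vertex, and to extract both conclusions from a single induction on the depth. All distances below are $\ell_\infty$-distances unless said otherwise; passing to $\ell_2$ costs only a dimensional factor, which is what forces $b$ to depend on $d$.

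I would first handle the deterministic claim~\eqref{eq:highway_dist_bounds}. The stopping rule~\eqref{eq:relation_n_N} forces $\gamma^{n}\ge(\log^{(2)}N)^{\varepsilon}/\log N$, hence $N_{n}\ge\exp[(\log^{(2)}N)^{\varepsilon}]$, and since $\gamma<1$ also $N_{n-2}\ge N_{n}\to\infty$; consequently $N_{k+1}/N_{k}=N_{k}^{\gamma-1}\le N_{n-2}^{\gamma-1}\to 0$ uniformly over $0\le k\le n-2$. Fix $N'=N'(\varepsilon,\gamma,d)$ so large that $N_{k+1}\le\tfrac18 N_{k}$ for all such $k$ once $N\ge N'$. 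One then shows by induction on $k$ that every pair $(z_{w0},z_{w1})$ with $w\in\{0,1\}^{k}$, $0\le k\le n-1$, obeys $|z_{w0}-z_{w1}|\in[\tfrac12 N_{k},N_{k}]$ (with $|z_0-z_1|=N=N_0$ the base case). In the inductive step with $|w|=k\le n-2$, the identities $z_{w00}=z_{w0}$, $z_{w11}=z_{w1}$ from Definition~\ref{defi:hierarchy} together with~\eqref{eq:cB_n_definition} give that the two offspring pairs $(z_{w0},z_{w01})$ and $(z_{w10},z_{w1})$ have distance in $[\tfrac12 N_{k+1},N_{k+1}]$ — this is the claim at level $k+1$ — while the triangle inequality yields
\[
\tfrac12 N_{k}-2N_{k+1}\ \le\ |z_{w01}-z_{w10}|\ \le\ N_{k}+2N_{k+1},
\]
so $|z_{w01}-z_{w10}|\in[\tfrac14 N_{k},2N_{k}]$, which after the $\ell_\infty$-to-$\ell_2$ comparison gives~\eqref{eq:highway_dist_bounds} with a suitable $b=b(d)\in(0,1)$.

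For~\eqref{eq:prob_cB_n_bound} I would make the construction explicit: traverse $w\in\{0,1\}^{k}$ in order of increasing $k$. When step $w$ is reached the endpoints $z_{w0},z_{w1}$ are already determined; let $\hat e$ be a lattice unit vector from $z_{w0}$ toward $z_{w1}$, let $\ZB_{w0}$ be the cube of side $\tfrac14 N_{k+1}$ centred at $z_{w0}+\tfrac34 N_{k+1}\hat e$, and $\ZB_{w1}$ the symmetric cube near $z_{w1}$; then every point of $\ZB_{w0}$ is at distance in $[\tfrac12 N_{k+1},N_{k+1}]$ from $z_{w0}$, so any choice $z_{w01}\in\ZB_{w0}$, $z_{w10}\in\ZB_{w1}$ fulfils~\eqref{eq:cB_n_definition}. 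Since $|z_{w0}-z_{w1}|\ge\tfrac12 N_{k}\gg N_{k+1}$ the cubes are disjoint, at mutual distance $\Theta(N_{k})$, and contain $\Theta(N_{k+1}^{d})$ sites each, so by the highway computation~\eqref{eq:highway} (with $N_{k+1}=N_{k}^{\gamma}$) the event $\{\ZB_{w0}\leftrightarrow_{1}\ZB_{w1}\}$ fails with probability $\exp[-\Theta(N_{k+1}^{2d}N_{k}^{-s})]=\exp[-\Theta(N_{k}^{2d\gamma-s})]$; on success one takes $z_{w01},z_{w10}$ to be the endpoints of such an open edge, and the resulting collection is by construction a hierarchy with~\eqref{eq:cB_n_definition}, so $\cB_n(x,y)$ occurs. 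The decisive point is that distinct steps inspect disjoint edge sets: running the induction above downward and using $N_{k+1}\ll N_{k}$, the alive pairs at each level occupy pairwise-disjoint neighbourhoods and each pair's two cubes lie within $N_{k+1}$ of its own endpoints, so for $w\ne w'$ at least one cube of step $w$ is disjoint from both cubes of step $w'$. Hence the $\sum_{k=0}^{n-2}2^{k}<2^{n-1}$ failure events are independent, each of probability $\le\exp[-cN_{k}^{2d\gamma-s}]\le\exp[-cN_{n}^{2d\gamma-s}]$ (as $N_{k}\ge N_{n-2}\ge N_{n}$ and $2d\gamma-s>0$), and a union bound gives the first inequality in~\eqref{eq:prob_cB_n_bound}; the second follows from $N_{n}\ge\exp[(\log^{(2)}N)^{\varepsilon}]$ together with the fact that the factor $2^{n-1}=\exp[O(\log^{(2)}N)]$ is negligible against the doubly exponential term and can be absorbed into the constant.

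The step I expect to be the genuine obstacle — and the one inherited from~\cite{biskup2004scaling} — is precisely this disjointness/independence bookkeeping: it works only because the scales are uniformly well separated, $N_{k+1}\ll N_{k}$ for $0\le k\le n-2$, which is exactly what the stopping rule~\eqref{eq:relation_n_N} guarantees by keeping $N_{n}$, and hence every relevant $N_{k}$, tending to infinity. Once that is secured the triangle-inequality induction and the union bound are routine, and the only remaining care concerns dimensional constants and the elementary asymptotics relating $N_{n}$ to $\log^{(2)}N$.
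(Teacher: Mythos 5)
Your proposal is correct and follows the paper's strategy in all essentials: scale separation from~\eqref{eq:relation_n_N} and~\eqref{eq:n_defi_consequences}, a triangle-inequality argument for~\eqref{eq:highway_dist_bounds}, the highway estimate~\eqref{eq:highway} applied to a single gap, and a union bound over the at most $2^{n-1}$ gaps for~\eqref{eq:prob_cB_n_bound}. The only stylistic difference is in the bookkeeping for the union bound: you build one hierarchy greedily with explicitly positioned cubes and invoke disjointness of the inspected edge sets across steps (so the per-step conditional failure probabilities are controlled as if independent), whereas the paper decomposes $\cB_n^{\comp}=\bigcup_{k}(\cB_{k-1}\cap\cB_{k}^{\comp})$ by the depth of first failure, working with annuli rather than cubes around the gap endpoints. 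These two packagings are interchangeable and yield identical bounds; the independence you note is not actually needed beyond making the per-step conditional estimate legitimate, since the final step is a union bound either way. One small simplification worth noting: the intermediate claim $|z_{w0}-z_{w1}|_\infty\in[\tfrac12 N_k,N_k]$ for $|w|=k$ is not really proved by induction — it follows directly from~\eqref{eq:cB_n_definition} applied to the parent word of $w$, together with the identifications $z_{\sigma00}=z_{\sigma0}$, $z_{\sigma11}=z_{\sigma1}$.
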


\begin{remark}
Let $\Delta' := \frac{\log 2}{\log (1/\gamma)}$.
The definition of $n$ yields:
\begin{equation}
\label{eq:n_defi_consequences}
2^{n}
    \le (\log N)^{\Delta'}
\quad \text{and} \quad
e^{(\log^{(2)} N)^{\varepsilon}}
    \le N_n \le e^{(1/\gamma)(\log^{(2)} N)^{\varepsilon}}.
\end{equation}

In fact, by the definition of $n$ and $N_n$ we can write
\begin{align*}
N_n
    &:= N^{\gamma^{n}}
    = \exp\bigl[ \exp[ \log^{(2)} N + n \log \gamma ] \bigr], \\
\text{and also} \qquad
\varepsilon \log^{(3)} N
    &\le \log^{(2)} N + n \log \gamma
    \le \varepsilon \log^{(3)} N + \log (1/\gamma).
\end{align*}
Therefore,
\begin{equation*}
e^{(\log^{(2)} N)^{\varepsilon}}
    =   e^{\exp[ \varepsilon \log^{(3)} N ]}
    \le N_n
    \le e^{\exp[ \varepsilon \log^{(3)} N + \log (1/\gamma) ]}
    =   e^{(1/\gamma)(\log^{(2)} N)^{\varepsilon}}.
\end{equation*}
The other inequality in~\eqref{eq:n_defi_consequences} follows from
$n \log (1/\gamma) \le \log^{(2)} N$.

\end{remark}

\begin{proof}[Proof of Lemma \ref{lema:hierarchy}]
By~\eqref{eq:n_defi_consequences}, we have that scales $N_k$ are 
well-separated meaning that, for $0 \le k \le n-1$,
\begin{equation}
\label{eq:scale_ratio}
\frac{N_k}{N_{k+1}}
    = N_k^{1 - \gamma}
    \geq N_{n}^{1-\gamma}
    \geq e^{(1-\gamma) (\log^{(2)} N)^{\varepsilon}}.
\end{equation}

Let $0 \le k \le n-2$ and $\sigma \in \{0,1\}^{k}$.
By~\eqref{eq:cB_n_definition} we have that
\begin{equation*}
|z_{\sigma 0} - z_{\sigma 1}|_{\infty} = \Theta(N_k)
\quad \text{and} \quad
|z_{\sigma 0} - z_{\sigma 01}|_{\infty},
|z_{\sigma 1} - z_{\sigma 10}|_{\infty} = \Theta(N_{k+1})
\end{equation*}
and thus the triangular inequality and \eqref{eq:scale_ratio} imply
\begin{equation*}
|z_{\sigma 01} - z_{\sigma 10}|_{\infty}
    = \Theta(N_{k}) + 2\Theta(N_{k+1})
    = \Theta(N_k),
\end{equation*}
and~\eqref{eq:highway_dist_bounds} follows. 

To obtain~\eqref{eq:prob_cB_n_bound} we partition $\cB_n^{\comp}$ according to
the first depth $k$ at which we fail to find highways. For that value of
$k$ the event $\mathcal{B}_{k-1}$ occurs and there is a hierarchy
$\mathcal{H}_{k-1}$ satisfying \eqref{eq:cB_n_definition}.
Let us fix a gap in $\cH_{k-1}$, which is either the form
$(z_{\sigma 00}, z_{\sigma 01})$ or $(z_{\sigma 10}, z_{\sigma 11})$ with
$\sigma \in \{0,1\}^{k-3}$. For the corresponding pair of neighborhoods, say
\begin{equation*}
\{z' \colon \tfrac{1}{2} N_{k-1} \le |z_{\sigma 00}-z'|_{\infty} \le
N_{k-1}\}
\quad \text{and} \quad
\{z' \colon \tfrac{1}{2} N_{k-1} \le |z_{\sigma
01}-z'|_{\infty} \le N_{k-1}\},
\end{equation*}
none of the edges between these neighborhoods must be open.
By~\eqref{eq:cB_n_definition} these neighborhoods are centered at sites
whose distance belongs to $\bigl[\tfrac{1}{2}N_{k-2}, N_{k-2}\bigr]$.
Moreover, each neighborhood has $c N_{k-1}^{d}$ vertices.
A straightforward adaptation of the argument leading
to~\eqref{eq:highway} applied to scale $N_{k-2}$ shows that the probability
of not finding an open edge linking a fixed pair of neighborhoods is
bounded above by
\begin{align*}
    & \exp\bigl[ - \beta c N_{k-1}^{2d} N^{-s}_{k-2} \bigr]
     \le \exp\bigl[ - c N_{n}^{2d\gamma - s} \bigr],
\end{align*}
where $c = c(\beta, d, s) > 0$. 
Since there are $2^{k-2}$ pairs of neighborhoods we have:
\begin{equation*}
\PP(\cB_n^{\comp})
    =   \sum_{k=2}^{n} \PP(\cB_{k-1} \cap \cB_{k}^{\comp})
    \le \sum_{k=2}^{n} 2^{k-2}
        e^{ - c N_{n}^{2d\gamma - s} }
    \le 2^{n-1} \cdot e^{ - c N_{n}^{2d\gamma - s} }.
\end{equation*}
The last bound in~\eqref{eq:prob_cB_n_bound} follows
from~\eqref{eq:n_defi_consequences}.
\end{proof}

\section{Bounding the Euclidean distance}
\label{sec:bounding_euclidean_distance}

Using the hierarchical construction from Biskup~\cite{biskup2004scaling} we
obtain a collection of highways that provides the main contribution for finding
open paths between two distant sites.  However, the remaining gaps must still
be connected in order to find an open path between the original two points that
actually uses these highways.  In~\cite{biskup2004scaling} this is accomplished
by requiring that the vertices $z_{\sigma}$ in $\cH_n(x,y)$ belong to
sufficiently large but local clusters. For our model we take a similar but
different strategy.

The idea is to make a hybrid approach, considering a renormalized lattice to
define a site-bond percolation model. The bond percolation part will be
coupled to a long-range percolation model. Independently of this bond
percolation part we define a site percolation that will be used to glue
together these highways, using an idea from Antal and
Pisztora~\cite{antal1996chemical}.

\subsection{Renormalization scheme}

We begin describing the renormalization procedure.  Partition $\ZZ^2$ into a
collection of boxes $(B_x;\; x \in \ZZ^2)$ where $B_x = B_x(K) := Kx +
[-K/2,K/2]^{2}$.  The exact choice of $K$ will depend on the parameters $u,
\alpha$ of the model and is deferred till
Lemma~\ref{lema:renormalized_properties}.  Each box $B_x$ is assigned an
\textit{enlarged box} and a \textit{core}, defined respectively as
\begin{equation*}
    B'_x := Kx + \bigl[-\tfrac{9K}{10}, \tfrac{9K}{10}\bigr]^{2}
    \quad \text{and} \quad
    B''_x := Kx + \bigl[-\tfrac{K}{10}, \tfrac{K}{10}\bigr]^{2}.
\end{equation*}
We say that the box of the origin $B_o$ is \textit{good} if the event
\begin{equation}
\label{eq:event_for_good_box}
\Bigl\{
    \bigl[\tfrac{5K}{10}, \tfrac{7K}{10}\bigr] \times
    \bigl[-\tfrac{9K}{10},-\tfrac{7K}{10}\bigr]
\sim_{\xi}
    \bigl[\tfrac{5K}{10}, \tfrac{7K}{10}\bigr] \times
    \bigl[ \tfrac{7K}{10}, \tfrac{9K}{10}\bigr]
\Bigr\}
\end{equation}
occurs, as well as the three similar events resulting from
\eqref{eq:event_for_good_box} by rotations by $\tfrac{\pi}{2}$, $\pi$ and
$\tfrac{3\pi}{2}$ around the origin, see Figure~\ref{fig:good_box}.  The events
$\{\text{$B_x$ is good}\}$ are defined analogously; in words, a box $B_x$ is
good if it is enclosed by a well-positioned circuit of overlapping ellipses
contained in its enlarged box, see Figure~\ref{fig:good_box}. We also say that
a site $x$ is good if its respective renormalized box, $B_x$, is good.
If $B_x$ is good, we denote by $O_x$ a circuit of ellipses that realizes
such event, chosen according to some predetermined rule.

Our site-bond percolation model is defined via the following configurations:
\begin{description}
\item[\textbf{Site percolation:}] 
$\bigl(\omega_{x}\bigr)
    := \bigl(\I\{\text{$B_x$ is good}\}; x \in \ZZ^{2} \bigr)$;
\item[\textbf{Bond percolation:}]
$\bigl(\omega_{xy}\bigr)
    := \bigl(\I\{B''_x \sim_{\xi} B''_y\};\;
        x,y \in \ZZ^{2}, x \neq y \bigr)$.
\end{description}

It follows from our construction that $\bigl(\omega_{x}\bigr)$ and
$\bigl(\omega_{xy}\bigr)$ are independent processes since they are defined in
terms of the PPP $\xi$ restricted to disjoint regions of $\RR^4$.  For the same
reason, $(\omega_x)$ is an independent (Bernoulli) site percolation process on
$\mathbb{Z}^2$.

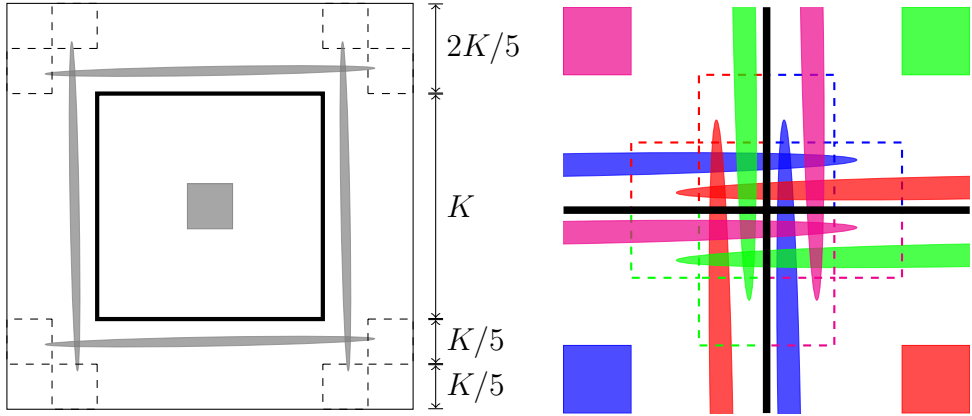
\begin{figure}
\centering
\begin{tikzpicture}[scale=0.3]
    \draw[ultra thick] (-5,-5) rectangle (5,5);
    \draw (-9,-9) rectangle (9,9);
    \filldraw[gray, opacity=.7] (-1,-1) rectangle (1,1);
    \foreach \x in {0, 90, 180, 270}{
        \begin{scope}[rotate=\x]
        \draw[dashed] (5,7) rectangle (7,9)
                      (7,5) rectangle (9,7);
        \filldraw[gray, opacity=.7,rotate around={ 1:(0,6)}]
            (0,6) circle (7.3 and .2);
        \end{scope}
    }
    \draw[|<->|] (10,-9) -- (10,-7) node[midway, right] {$K/5$};
    \draw[|<->|] (10,-7) -- (10,-5) node[midway, right] {$K/5$};
    \draw[|<->|] (10,-5) -- (10, 5) node[midway, right] {$K$};
    \draw[|<->|] (10, 5) -- (10, 9) node[midway, right] {$2K/5$};
\end{tikzpicture}%
\hspace{3mm}
\begin{tikzpicture}[scale=0.3]
\clip (-9,-9) rectangle (9,9);
\foreach \x/\y in {0/blue, 90/red, 180/green, 270/magenta}{
    \begin{scope}[rotate=\x]
    \draw[color=\y,dashed, thick] (0,3) rectangle (3,6)
                  (3,0) rectangle (6,3);
    \filldraw[color=\y,opacity=.7] (-9,-9) rectangle (-6,-6);
    \filldraw[rotate around={ 1:(1,-9)},color=\y,opacity=.7]
        (1,-9) circle (.5 and 13);
    \filldraw[rotate around={ 1:(-9,2)},color=\y,opacity=.7]
        (-9,2) circle (13 and .5);
    \end{scope}
}
\draw[line width=1mm] (-9,0) -- (9,0) (0,-9) -- (0,9);
\end{tikzpicture}
\caption{A renormalized box $B_x$ is good if there is a carefully
    positioned surrounding circuit of ellipses $O_x$ contained in its enlarged box $B'_x$;
    we also highlight its core $B''_x$. 
    On the right, we emphasize that
    $\ast$-neighboring good boxes must have their respective outer
    circuits interlaced. 
    Outer circuits and cores of a same box are shown in
    matching colors to help visualization.}
\label{fig:good_box}
\end{figure}

Our definition of a good box is close to the definition of good boxes used
in~\cite{antal1996chemical}. Essentially, it ensures that in a cluster of good
boxes one is able to move from one box to a neighboring one remaining inside
the covered set. This holds not only when moving along the coordinate
directions (to a box that shares a side) but also when moving diagonally (to a
box that shares a single vertex). We briefly discuss this notion of
connectivity now introducing notation that is very similar to that
of~\cite{antal1996chemical}.

\textbf{On $\ast$-connected sets.}
For $x, y \in \ZZ^{d}$ define
\begin{equation*}
x \sim y \quad \text{if $|x-y|=1$}
\quad \text{and} \quad
x \stackrel{\ast}{\sim} y \quad \text{if $|x-y|_{\infty}=1$}.
\end{equation*}
Given a configuration ${\omega \in \{0,1\}^{\ZZ^{d}}}$, we say that a site $x
\in \mathbb{Z}^d$ is good if $\omega_x=1$.  Otherwise $x$ is said bad.  Denote
by $\badC_x$ the bad cluster (with respect to $\stackrel{\ast}{\sim}$) containing $x$.
We use the convention that $\badC_x = \varnothing$ if $x$ is good.  
For a finite subset $\Lambda \subset \ZZ^{d}$
define its \textit{outer} and \textit{inner} boundaries by
\begin{equation*}
\partial^{o} \Lambda
    := \{x \in \Lambda^{c};\; \exists y \in \Lambda, x \sim y\}
\quad \text{and} \quad
\partial^{i} \Lambda
    := \{x \in \Lambda;\; \exists y \in \Lambda^{c}, x \sim y\},
\end{equation*}
respectively.
We use the convention that $\partial^{o} \badC_x = \{x\}$ when $x$ is good.

For $\Lambda$ finite, its complementary set $\Lambda^{c}$ contains a finite
number of connected components $\Lambda_1, \ldots, \Lambda_k$.  Exactly one of
them, say $\Lambda_1$ is infinite; the other ones, if any, are called
\textit{holes}.  When holes exist, we define $\hLambda := \Lambda \cup
\Lambda_2 \cup \ldots \cup \Lambda_k$ which may be regarded as the result of
filling all holes in $\Lambda$.  We also define the \textit{external outer
boundary} and the \textit{external inner boundary} of $\Lambda$ respectively as
\begin{equation*}
\partial^{o}_{e} \Lambda := \partial^{o} \hLambda
\quad \text{and} \quad
\partial^{i}_{e} \Lambda := \partial^{i} \hLambda.
\end{equation*}
An important topological fact is given
by~\cite[Statement (3.35)]{antal1996chemical}:
\begin{display}
\label{eq:ast_connect_property}
for any finite $\Lambda$ that is $\ast$-connected we have that\\
$\partial^{o}_{e} \Lambda$ and $\partial^{i}_{e} \Lambda$
are both $\ast$-connected.
\end{display}
This is important because whenever we find a region composed of bad sites, we
can  contour that bad region using its exterior boundary of good sites. 

\medskip
\noindent
\textbf{On the renormalized model.}
Recall that the PPP $\xi$ can be parametrized using either $(s, \beta)$ or
$(u, \alpha)$, by~\eqref{eq:parameters_relation}.

\begin{lema}
\label{lema:renormalized_properties}
The following properties hold for the renormalized model:
\begin{itemize}
\item[\textbf{\textup{P1}}.] Fix $p \in (0,1)$ and $\beta_0 > 0$.
    There is $K(u, \alpha, p, \beta_0)$ large enough such that
    $(\omega_x)$ dominates independent site percolation with parameter $p$ and
    $(\omega_{xy})$ dominates long-range percolation with
    $p_{xy} = 1 - e^{-\beta_0 |x-y|^{-s}}$.
\item[\textbf{\textup{P2}}.] If $W$ is a $\ast$-connected set of good sites then all
    the surrounding circuits $O_x$, $x\in W$  are contained in the same connected component of ellipses.
\item[\textbf{\textup{P3}}.]   If $\badC_x$ is finite, then
    $\partial^{o}_{e} \badC_x$ is a $\ast$-connected set of good sites.
\end{itemize}
\end{lema}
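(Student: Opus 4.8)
I would treat the three items separately; they are of rather different natures, P1 being a first-moment estimate feeding a stochastic-domination argument, while P2 and P3 are deterministic statements (geometric, resp.\ topological) about the renormalized configuration.

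\textbf{P1.} Both domination claims reduce to two estimates: that a fixed box is good with probability close to $1$, and that two distinct cores are $\xi$-connected with probability at least $p_{xy}$. The event $\{B_o\text{ is good}\}$ is the intersection of the four events obtained from~\eqref{eq:event_for_good_box} by rotating by $0,\tfrac{\pi}{2},\pi,\tfrac{3\pi}{2}$, and each has the form $\{A_1\sim_\xi A_2\}$ with $A_1,A_2$ axis-parallel squares of side $K/5$ at mutual distance $\Theta(K)$. Since $\PP(A_1\sim_\xi A_2)=1-e^{-\beta I_{12}}$ where $I_{12}:=\int_{A_1\times A_2}|p-q|^{-s}\,\mathrm{d}p\,\mathrm{d}q\ge |A_1|\,|A_2|\bigl(\diam A_1+\dist(A_1,A_2)+\diam A_2\bigr)^{-s}=\Theta(K^{4-s})$, and $s=2+\alpha$ gives $4-s=2-\alpha>0$, we get $\PP(A_1\sim_\xi A_2)\ge 1-e^{-c(u,\alpha)K^{2-\alpha}}$; a union bound over the four sub-events yields $\PP(B_o\text{ is good})\ge 1-Ce^{-cK^{2-\alpha}}\to 1$ as $K\to\infty$. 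As $(\omega_x)$ is an independent Bernoulli field, choosing $K$ with $\PP(B_o\text{ is good})\ge p$ gives domination of independent site percolation with parameter $p$. For the bond part, the same computation applied to two distinct cores, using $|p-q|\le 2K|x-y|$ for $p\in B''_x$, $q\in B''_y$ when $|x-y|\ge1$, gives uniformly in $x\ne y$
\begin{align*}
\PP(B''_x\sim_\xi B''_y)
  &=1-\exp\Bigl[-\beta\int_{B''_x\times B''_y}|p-q|^{-s}\,\mathrm{d}p\,\mathrm{d}q\Bigr]\\
  &\ge 1-\exp\bigl[-c'(\alpha)\,\beta\,K^{2-\alpha}|x-y|^{-s}\bigr].
\end{align*}
Choosing $K$ large enough that $c'(\alpha)\,\beta\,K^{2-\alpha}\ge\beta_0$, and using that $(\omega_{xy})$ is an independent bond field (the events $\{B''_x\sim_\xi B''_y\}$ are determined by $\xi$ on the pairwise disjoint sets $B''_x\times B''_y$), we obtain $\PP(\omega_{xy}=1)\ge p_{xy}$ and hence the claimed domination. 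Taking $K$ to be the larger of the two thresholds proves P1.

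\textbf{P2.} Lying in the same connected component of the covered set is a transitive relation, so it suffices to show $O_x\cap O_y\ne\varnothing$ whenever $x\stackrel{\ast}{\sim}y$ are both good, and then chain along a $\ast$-path inside $W$. The tool is the elementary planar fact that, inside a rectangle, any connected set joining its two vertical sides must meet any connected set joining its two horizontal sides. The positions of the boundary squares in~\eqref{eq:event_for_good_box} (and its rotations) are chosen precisely so that, for each of the eight relative positions of $y$ with respect to $x$, one of the four ellipses forming $O_x$ contains a piece of its major axis crossing a small explicit rectangle $R$ from left to right, while one of the four ellipses forming $O_y$ contains a piece of its major axis crossing the \emph{same} $R$ from bottom to top. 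For instance, for $y=x+(1,0)$ one takes $R=Kx+\bigl([\tfrac{3K}{10},\tfrac{5K}{10}]\times[\tfrac{5K}{10},\tfrac{7K}{10}]\bigr)$, which the top ellipse of $O_x$ crosses horizontally and the left ellipse of $O_y$ crosses vertically; for $y=x+(1,1)$ one takes $R=Kx+\bigl([\tfrac{5K}{10},\tfrac{7K}{10}]\times[\tfrac{3K}{10},\tfrac{5K}{10}]\bigr)$, which the right ellipse of $O_x$ crosses vertically and the bottom ellipse of $O_y$ crosses horizontally; the remaining positions follow by the symmetries of the construction. In each case the crossing fact forces $O_x\cap O_y\ne\varnothing$, proving P2.

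\textbf{P3.} If $x$ is good then $\badC_x=\varnothing$ and $\partial^{o}_{e}\badC_x=\{x\}$ by convention, so there is nothing to prove; assume $x$ is bad. Being a cluster with respect to $\stackrel{\ast}{\sim}$, $\badC_x$ is $\ast$-connected, so when it is finite~\eqref{eq:ast_connect_property} applied with $\Lambda=\badC_x$ gives immediately that $\partial^{o}_{e}\badC_x=\partial^{o}\widehat{\badC}_x$ is $\ast$-connected. It remains to check that every $z\in\partial^{o}\widehat{\badC}_x$ is good. Such a $z$ lies in the infinite $\sim$-component $\Lambda_1$ of $\ZZ^2\setminus\badC_x$ (that is exactly what $z\notin\widehat{\badC}_x$ means) and is $\sim$-adjacent to some $w\in\widehat{\badC}_x$. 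Since $z\in\Lambda_1$ and $\Lambda_1$ is a $\sim$-component of $\ZZ^2\setminus\badC_x$, the neighbour $w$ cannot lie in a hole (a \emph{different} $\sim$-component of $\ZZ^2\setminus\badC_x$); hence $w\in\badC_x$. If $z$ were bad, it would be $\sim$-adjacent, a fortiori $\stackrel{\ast}{\sim}$-adjacent, to the bad site $w\in\badC_x$, which would force $z\in\badC_x$ and contradict $z\in\Lambda_1$. Therefore $z$ is good, and P3 follows.

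\textbf{Main obstacle.} P1 is a routine renormalization/first-moment estimate and P3 is essentially immediate from~\eqref{eq:ast_connect_property}; the real content is in P2 — namely in having arranged the good-box event~\eqref{eq:event_for_good_box} so that the circuits of $\ast$-neighbouring good boxes are genuinely interlaced, and then verifying the left–right/top–bottom crossing claim in each of the (up to symmetry, two) relative positions.
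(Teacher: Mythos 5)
Your proposal is correct and follows essentially the same route as the paper: the same first-moment/renormalization estimate for P1 (the paper closes the site part with FKG rather than a union bound and deduces the bond part from the scaling identity~\eqref{eq:scaling_mu} rather than a direct integral lower bound, but these are cosmetic differences), the same interlacing-of-circuits picture for P2, and the same appeal to~\eqref{eq:ast_connect_property} for P3. The one genuine service you render is spelling out P2 concretely (the explicit choice of the small square $R$ and the left--right/top--bottom crossing argument for the two relative positions up to symmetry) and the ``why the outer boundary consists of good sites'' step in P3, both of which the paper dispatches as ``a straightforward geometric consequence'' and ``follows from~\eqref{eq:ast_connect_property}'' respectively; your versions are accurate fillings-in of those gaps rather than an alternative argument.
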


\begin{proof}
Property \textbf{P2} is a straightforward geometric consequence of the
definition  (see Figure~\ref{fig:good_box}) and Property \textbf{P3}
follows from~\eqref{eq:ast_connect_property}.

We now prove Property \textbf{P1}.  Denote by $A_0$ the event
in~\eqref{eq:event_for_good_box} and by $A_i$, for $1 \le i \le 3$ the
three similar events resulting from \eqref{eq:event_for_good_box} by
rotations by $\tfrac{\pi}{2}$, $\pi$ and $\tfrac{3\pi}{2}$ around the
origin, respectively.  Since $\xi$ is invariant with respect to
translations and rotations, any $A_i$ has probability
\begin{equation*}
\PP(A_i)
    = \PP\bigl([-\tfrac{K}{10}, \tfrac{K}{10}]^2 \sim_{\xi}
        (\tfrac{8K}{5}, 0) + [-\tfrac{K}{10}, \tfrac{K}{10}]^2\bigr)
    \geq 1 - e^{
            - \beta \, \smash{(\max\limits_{x,y}} {|x-y|)}^{-s} \cdot
            {(\frac{K}{5})}^{4}
        }
\end{equation*}
where the maximum runs over all points $x$ and $y$ in the first and second
boxes, respectively. The maximum is a constant multiple of $K$, so we can write
\begin{equation*}
\PP(A_i)
    \geq 1 - e^{ - c K^{4-s} } \to 1
    \quad \text{as $K \to \infty$}
\end{equation*}
for some constant $c = c(\beta, s) > 0$, since we are assuming
$s \in (3, 4)$. Then, FKG inequality implies $\PP(\text{$B_o$ is good})$ also
tends to $1$.
 
For the probability of an edge being open, we notice that
$B''_x \sim_{\xi} B''_y$ is a scaling by $K/5$ of event 
\begin{equation}
\label{eq:rescaled_crossing}
\Bigl\{ \bigl[-\tfrac{1}{2}, \tfrac{1}{2}\bigr]^{2} \sim_{\xi}
    5(y-x) + \bigl[-\tfrac{1}{2}, \tfrac{1}{2}\bigr]^{2} \Bigr\}.
\end{equation}

By~\eqref{eq:scaling_mu} we can relate the probability of $B''_x \sim_{\xi}
B''_y$ with that of the event in \eqref{eq:rescaled_crossing} under a
rescaled long-range model whose intensity can be made as high as we want by
increasing $K$.  This completes the proof of Property \textbf{P1}.
\end{proof}

\subsection{Gluing highways}
\label{sub:gluing_highways}

Given two fixed sites $x, y \in \RR^2$, Lemma~\ref{lema:hierarchy} roughly
states that, for the long-range model in the renormalized lattice, hierarchies
exist with very high probability. On the event $x \leftrightarrow y$ we want
to use the highway structure entailed by one of these hierarchies in order to
find a path that connects $x$ to $y$ efficiently.

For $z \in \mathbb{R}^2$, let $a(z) \in \ZZ^{2}$ be the unique site in
$\ZZ^{2}$ such that $z \in Ka(z) + [-K/2,K/2)^{2}$.  The distance between the
original points $x$ and $y$ and the distance between their respective
counterparts $a(x)$ and $a(y)$ in the renormalized lattice can be compared as
\begin{align}
|x-y|
    &= \bigl|(x - Ka(x)) - (y - Ka(y)) + K(a(x) - a(y))\bigr| \nonumber\\
\label{eq:relation_points_sites}
    &= K \cdot \bigl|a(x) - a(y)\bigr| + KO(1).
\end{align}
Here the $L_2$-norm could be replaced by any other norm on $\RR^2$.
Lemma~\ref{lema:hierarchy} implies that $\cB_n(a(x), a(y))$
has probability close to 1 (provided that $N$ is large and $n$ satisfies
\eqref{eq:relation_n_N}).  Conditional on $\cB_n$, we can find a collection of
sites $\bigl(z_{\sigma}; \sigma \in \{0,1\}^{n}\bigr)$ together with the
endowed highway structure connecting some of them.  If there is more than one
choice, just pick one of them according to a predetermined rule.

We still have to ensure that all the remaining gaps, that is, all the $2^{n-1}$
edges of type $(z_{\sigma 00}, z_{\sigma 01})$ or $(z_{\sigma 10}, z_{\sigma
11})$ for $\sigma \in \{0,1\}^{n-2}$ are connected with high probability.
This can be done with the aid of an argument from~\cite{antal1996chemical}.

We can specify each gap uniquely as $(z_{\sigma 0}, z_{\sigma 1})$, with
$\sigma \in \{0,1\}^{n-1}$.  Writing $\tilde{N} := |a(x) - a(y)|$ and
$\tilde{N}_k = \tilde{N}^{\gamma^{k}}$, on the event $\cB_n(a(x), a(y))$ we
have 
\begin{equation}
\label{eq:dist_gaps}
|z_{\sigma 1} - z_{\sigma 0}|_{\infty}
    \in \Bigl[\tfrac{1}{2} \tilde{N}_{n-1}, \tilde{N}_{n-1}\Bigr]
\quad \text{for every $\sigma \in \{0,1\}^{n-1}$,}
\end{equation}
see \eqref{eq:cB_n_definition}.

Moreover, \eqref{eq:highway_dist_bounds} 
guarantees that the highways of the form $(z_{\sigma 01}, z_{\sigma 10})$ with
$\sigma \in \{0,1\}^{k}$ and $0\leq k \leq n-2$ have length
$|z_{\sigma 01} - z_{\sigma 10}| = \Theta(\tilde{N}_{k})$.
From the point of view of our original ellipses model, a highway connecting 
$z_{\sigma 01}$ and $z_{\sigma 10}$ represents an actual ellipse $E_{\sigma}$
that realizes the event $B''_{z_{\sigma 10}} \sim_{\xi} B''_{z_{\sigma 01}}$. Thus,
\newconstant{c:ellipse_length}
\begin{equation}
\label{eq:highway_diam}
\diam (E_{\sigma})
    = K \cdot \bigl(|z_{\sigma 10} - z_{\sigma 01}| + O(1)\bigr)
\end{equation}
and consequently the
\begin{equation}
\label{eq:highway_boxes_touched}
\text{(number of renormalized boxes intersected by $E_{\sigma}$)} 
    \in \bigl[
        \uc{c:ellipse_length} \tilde{N}_{k},
        \uc{c:ellipse_length}^{-1} \tilde{N}_{k}
        \bigr]
\end{equation}
for a positive constant $\uc{c:ellipse_length}$ that will remain fixed from now
on.  Also, the site percolation process $(\omega_x)$ is independent of the
collection $\bigl(z_{\sigma}; \sigma \in \{0,1\}^{n}\bigr)$.

For each gap $(z_{\sigma 0}, z_{\sigma 1})$, with $\sigma \in \{0,1\}^{n-1}$
write $m_{\sigma} := |z_{\sigma 0} - z_{\sigma 1}|_1$ and fix a deterministic
path (according to a predetermined rule) of $m_{\sigma} + 1$ neighboring sites
that realize this distance, meaning
\begin{equation*}
z_{\sigma 0} = z^{(\sigma)}_{0}, z^{(\sigma)}_{1}, \ldots,
z^{(\sigma)}_{m_{\sigma}} = z_{\sigma 1}
\quad \text{
with \quad $z^{(\sigma)}_{j} \sim z^{(\sigma)}_{j+1}$ for $0 \le j < m_{\sigma}$.
}
\end{equation*}
Recall that $\badC_z$ denotes the $\ast$-connected cluster of bad sites
containing $z$ and define $\bbadC(z) := \badC_z \cup \partial^{o}\badC_z$.  We
look at the random subset of $\RR^{2}$ composed by the boxes associated to the
bad clusters of the sites along the path $(z^{(\sigma)}_j)$:
\begin{equation}
\label{eq:defi_W_sigma}
W_{\sigma}
    := \bigcup_{j=0}^{m_{\sigma}} \Bigl(
    \bigcup_{z \in \bbadC(z^{(\sigma)}_j)} B'_{z} \Bigr).
\end{equation}
Denoting by $\# W_{\sigma}$ the number of sites in the renormalized lattice
that one needs to explore to find $W_{\sigma}$, we have:
\newconstant{c:W}
\begin{lema}
\label{lema:control_W_sigma}
For every $a > 0$ there exists $\uc{c:W} = \uc{c:W}(a, p) > 0$
such that
\begin{equation}
\label{eq:control_W_sigma}
\PP \Bigl(
    \cB_{n} \cap \Bigl(\hspace{-3mm}
        \bigcup_{\sigma \in \{0,1\}^{n-1}} \hspace{-3mm}
        \bigl\{\# W_{\sigma} \geq a \tilde{N}_{n-2} \bigr\}
    \Bigr)
\Bigr)
    \le 2^{n-1} \cdot e^{-\uc{c:W} \tilde{N}_{n-2}} \ll 1.
\end{equation}
\end{lema}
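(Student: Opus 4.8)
\emph{Proof strategy.} The plan is to split $\#W_\sigma$ into a negligible lower‑order term and the size of the union of the bad $\ast$‑clusters met by the gap path, and then bound that size by a Chernoff estimate that exploits the scale separation $\tilde N_{n-1}=o(\tilde N_{n-2})$ coming from \eqref{eq:scale_ratio} together with the independence of $(\omega_x)$ from the hierarchy. \textbf{Geometric reduction.} Work on $\cB_n$. By \eqref{eq:dist_gaps}, $|z_{\sigma 1}-z_{\sigma 0}|_\infty\le \tilde N_{n-1}$, so the chosen $\ell_1$‑geodesic $z^{(\sigma)}_0,\dots,z^{(\sigma)}_{m_\sigma}$ consists of $m_\sigma+1\le 2\tilde N_{n-1}+1$ distinct sites. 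Let $\mathcal{U}_\sigma:=\bigcup_{j=0}^{m_\sigma}\badC_{z^{(\sigma)}_j}$ be the (disjoint) union of the bad $\ast$‑clusters met by the path. Since a bad $\ast$‑cluster is maximal, $\partial^{o}\badC_{z^{(\sigma)}_j}\subseteq\partial^{o}\mathcal{U}_\sigma$ whenever $z^{(\sigma)}_j$ is bad, while $\partial^{o}\badC_{z^{(\sigma)}_j}=\{z^{(\sigma)}_j\}$ otherwise; as $|\partial^{o}A|\le 4|A|$ on $\ZZ^2$ this gives
\[
\#W_\sigma\ \le\ \Bigl|\bigcup_{j=0}^{m_\sigma}\bbadC(z^{(\sigma)}_j)\Bigr|\ \le\ 5\,|\mathcal{U}_\sigma|+(m_\sigma+1).
\]
By \eqref{eq:scale_ratio} (for $\tilde N$) we have $\tilde N_{n-2}/\tilde N_{n-1}\ge e^{(1-\gamma)(\log^{(2)}\tilde N)^{\varepsilon}}\to\infty$, so there is $N'$ with $2\tilde N_{n-1}+1\le\tfrac{a}{2}\tilde N_{n-2}$ once $|x-y|\ge N'$; hence on $\cB_n$,
\[
\{\#W_\sigma\ge a\tilde N_{n-2}\}\ \subseteq\ \bigl\{|\mathcal{U}_\sigma|\ge\tfrac{a}{10}\tilde N_{n-2}\bigr\}.
\]

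\textbf{Exponential moment of $|\mathcal{U}_\sigma|$.} The configuration $(\omega_x)$ is independent of the bond‑percolation part, hence of $\cB_n$ and of the rule‑chosen hierarchy, in particular of the family of gap paths. Conditioning on the bond configuration, on $\cB_n$ each $z^{(\sigma)}_\bullet$ is a fixed $\ell_1$‑geodesic of at most $2\tilde N_{n-1}+1$ distinct sites, and $\mathcal{U}_\sigma$ has the law of $\bigcup_j\badC_{w_j}$ for those sites under the unconditioned site percolation. By property \textbf{\textup{P1}}, $(1-\omega_x)_x$ is dominated by i.i.d.\ Bernoulli$(1-p)$ site percolation, which is (deeply) subcritical for $\ast$‑adjacency because $p$ is chosen close to $1$; hence $M(\lambda):=\EE[e^{\lambda|\badC_o|}]<\infty$ for some $\lambda=\lambda(p)>0$. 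Revealing the distinct bad $\ast$‑clusters met by the path one at a time — each fresh exploration taking place in the still‑unrevealed region, where the already forced‑good sites can only shrink clusters (monotonicity of $\ast$‑cluster size) — and using that there are at most $m_\sigma+1$ of them, we obtain
\[
\EE\bigl[e^{\lambda|\mathcal{U}_\sigma|}\,\big|\,\text{bond}\bigr]\ \le\ M(\lambda)^{\,m_\sigma+1}\ \le\ M(\lambda)^{\,2\tilde N_{n-1}+1}\qquad\text{on }\cB_n .
\]
Markov's inequality then gives, on $\cB_n$,
\[
\PP\bigl(|\mathcal{U}_\sigma|\ge\tfrac{a}{10}\tilde N_{n-2}\,\big|\,\text{bond}\bigr)\ \le\ e^{-\frac{\lambda a}{10}\tilde N_{n-2}}\,M(\lambda)^{\,2\tilde N_{n-1}+1},
\]
and since $\tilde N_{n-1}=o(\tilde N_{n-2})$, enlarging $N'$ makes the right‑hand side at most $e^{-\uc{c:W}\tilde N_{n-2}}$ with $\uc{c:W}:=\tfrac{\lambda a}{20}$ (a function of $a$ and $p$ only). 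Taking expectations over the bond configuration and summing over the $2^{n-1}$ gaps $\sigma\in\{0,1\}^{n-1}$ yields the first inequality in \eqref{eq:control_W_sigma}. It is $\ll 1$: by \eqref{eq:n_defi_consequences}, $\tilde N_{n-2}\ge\tilde N_n\ge e^{(\log^{(2)}\tilde N)^{\varepsilon}}$ grows faster than any power of $\log^{(2)}\tilde N$, while $2^{n-1}\le(\log\tilde N)^{\Delta'}$, so $2^{n-1}e^{-\uc{c:W}\tilde N_{n-2}}\to0$ as $|x-y|\to\infty$.

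\textbf{Main obstacle.} The one genuinely delicate point is the conditional exponential‑moment bound $\EE[e^{\lambda|\mathcal{U}_\sigma|}\mid\text{bond}]\le M(\lambda)^{m_\sigma+1}$: the clusters $\badC_{z^{(\sigma)}_j}$ are strongly dependent (they frequently coincide for neighbouring $j$), so one cannot treat them as independent and must instead pass to their distinct representatives and reveal them sequentially, dominating each conditional cluster law by a fresh copy of $|\badC_o|$ through monotonicity — this is the argument of \cite{antal1996chemical}. A cruder estimate such as $\sum_j|\badC_{z^{(\sigma)}_j}|\le (m_\sigma+1)\max_j|\badC_{z^{(\sigma)}_j}|$ combined with a union bound would only yield the weaker (though still $\ll 1$) bound $2^{n-1}e^{-c\tilde N_{n-2}/\tilde N_{n-1}}$, which does not reproduce the exponent in \eqref{eq:control_W_sigma}.
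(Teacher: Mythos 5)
Your proof is correct and takes essentially the same route as the paper's: both reduce $\#W_\sigma$ to the total size of the bad $\ast$-clusters met by the gap path up to an $O(m_\sigma)$ error, invoke the Antal--Pisztora/Fontes--Newman stochastic domination to obtain a finite exponential moment (you phrase it as sequential revelation bounding the union $|\mathcal{U}_\sigma|$, while the paper dominates the sum $\sum_j \#\badC(z_j^{(\sigma)})$ by a sum of i.i.d.\ copies of $\#\badC_o$), and then conclude by Markov's inequality and the scale separation $m_\sigma=\Theta(\tilde N_{n-1})\ll\tilde N_{n-2}$. The only cosmetic difference is working with the union of clusters rather than the (possibly larger) sum of cluster sizes; both yield the same exponent.
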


\begin{proof}
We have $\# \bbadC_z = 1$ when $z$ is good.
Otherwise, since each site of $\badC_z$ has at most 8 neighbors 
\begin{equation*}
\# W_{\sigma}
    \le \sum_{j=0}^{m_{\sigma}} \# \bbadC(z^{(\sigma)}_j)
    \le 1 + m_{\sigma} + 8 \sum_{j=0}^{m_{\sigma}} \# \badC(z^{(\sigma)}_j).
\end{equation*}

Also, using an argument from~\cite{antal1996chemical} based on a previous
construction in Fontes and Newman~\cite{fontes1993first} (see the proof of
Theorem~4), if $(\tbadC_z)_{z\in \ZZ^{2}}$ is a collection of independent
random subsets of $\ZZ^{2}$ with $\tbadC_z \distr \badC_o$, then
$(\tbadC_z)$ dominates stochastically $(\badC_z)$.
Defining $Y_z := \# \tbadC_z$, we have that $(Y_z, z\in \ZZ^{2})$ are
i.i.d.\ random variables with the same distribution as $\# \badC_o$. 
We have for $Y_j := \smash{Y_{z_j^{(\sigma)}}}$ that
\begin{equation*}
\# W_{\sigma}
    \preccurlyeq 1 + m_{\sigma} +
        8 \sum_{j=0}^{m_{\sigma}} \# \tbadC(z^{(\sigma)}_j)
    \distr 1 + m_{\sigma} + 8 \sum_{j=0}^{m_{\sigma}} Y_j.
\end{equation*}
Notice that $\badC_o$ is a $\ast$-cluster of bad sites in a Bernoulli site
percolation of parameter $p$ and by Lemma~\ref{lema:renormalized_properties}
we can start the construction with $p$ sufficiently close to 1 so that
the probability of a site being bad, $1-p$, is subcritical, and then choose
$K(u, \alpha, p, \beta_0)$ accordingly. Exponential decay of cluster size (see
e.g.~\cite[Theorem~(6.75)]{grimmett1999percolation}) yields
$\psi(p) > 0$ such that $h(p) := \EE\bigl[e^{\psi(p) Y_j}\bigr] < \infty$.
Hence, for any fixed $\sigma \in \{0,1\}^{n-1}$, an application of Markov's Inequality yields
\begin{align*}
\PP \bigl(\cB_{n} \cap \{\# W_{\sigma} \geq a \tilde{N}_{n-2}\} \bigr)
    &\le \PP \Bigl(
        \sum_{j=0}^{m_{\sigma}} Y_j
        \geq \frac{a}{8} \tilde{N}_{n-2} + O(m_{\sigma})
        \Bigr) \\
    &\le \exp\Bigl[- \frac{a\psi(p)}{8} \tilde{N}_{n-2} + O(m_{\sigma})\Bigr] \cdot
        \EE \bigl[ e^{\psi(p) \sum_{j=0}^{m_{\sigma}} Y_j} \bigr] \\
    &= \exp\Bigl[
        - \frac{a\psi(p)}{8} \tilde{N}_{n-2} + O(m_{\sigma}) +
        \log h(p) \cdot m_{\sigma} 
        \Bigr].
\end{align*}
Finally, by~\eqref{eq:dist_gaps} we can write
\begin{equation*}
m_{\sigma}
    = |z_{\sigma 0} - z_{\sigma 1}|_1
    = \Theta(\tilde{N}_{n-1})
    \ll \tilde{N}_{n-2}.
\end{equation*}
The estimate on~\eqref{eq:control_W_sigma} follows from a union bound and the
bounds obtained in~\eqref{eq:n_defi_consequences}.
\end{proof}

Recall the definition of $\uc{c:ellipse_length}$
in~\eqref{eq:highway_boxes_touched} and take
$a = \frac{1}{3}\uc{c:ellipse_length}$ at Lemma~\ref{lema:control_W_sigma}.
Define
\begin{equation}
\cW_n := 
    \hspace{-3mm}
    \smash{\bigcap_{\sigma \in \{0,1\}^{n-1}}} \hspace{-3mm}
    \bigl\{\# W_{\sigma} \le \tfrac{\uc{c:ellipse_length}}{3}
    \tilde{N}_{n-2} \bigr\}.
\end{equation}
We have $\PP(\cB_{n} \cap \cW_{n}^{\comp}) \ll 1$, meaning that with high
probability every $W_{\sigma}$ is too small to contain any highway.  Fix some
$\sigma \in \{0,1\}^{n-1}$.  By \textbf{P2} and \textbf{P3} on
Lemma~\ref{lema:renormalized_properties} we can use the external inner boundary
$\partial^{i}_{e} W_{\sigma}$, a $\ast$-connected set of good boxes, to glue
together the highways that arrive at $z_{\sigma 0}$ and $z_{\sigma 1}$, the
procedure is illustrated in Figure~\ref{fig:gluing_highways}.

\begin{figure}
\centering
\begin{tikzpicture}[scale=1.2,
    dot/.style={
        draw,circle,minimum size=1mm,inner sep=0pt,outer
        sep=0pt,fill=black}
    ]
    \clip (-2,-.5) rectangle (9, 4.5);
    \draw (-2,-.5) grid (9, 4.5);
    \filldraw[gray, opacity=.8]
        (0,0) rectangle ++(1,2)
        (1,0) rectangle ++(1,1)
        (3,0) rectangle ++(1,2)
        (4,2) rectangle ++(3,1)
        (7,1) rectangle ++(1,1);
    \filldraw[gray, opacity=.3]
        (-1,-1) rectangle ++(6,1)
        (-1, 0) rectangle ++(10,3)
        (3,3) rectangle ++(5,1);
    \filldraw[blue, opacity=.7,rotate around={147:(9,2)}] (9, 2) circle (3 and .2);
    \filldraw[blue, opacity=.7,rotate around={-95:(1,5)}] (1, 5) circle (4.5 and .2);
    \node at (0.5,0.3) {$B_{z_{\sigma 1}}$};
    \node at (6.5,3.3) {$B_{z_{\sigma 0}}$};
    \draw[ultra thick] (0,0) grid (7,1) (6,1) grid (7, 4);
\end{tikzpicture}
    \caption{Region $W_{\sigma}$ (light gray) explores
    bad boxes (dark gray) on a deterministic path of boxes (thick lines) and
    $\partial^{i}_{e} W_{\sigma}$ is made of good boxes. When
    $\# W_{\sigma}$ is small, the highways arriving at
    $B_{z_{\sigma 0}}$ and $B_{z_{\sigma 1}}$ can be connected through
    $\partial^{i}_{e} W_{\sigma}$.}
\label{fig:gluing_highways}
\end{figure}
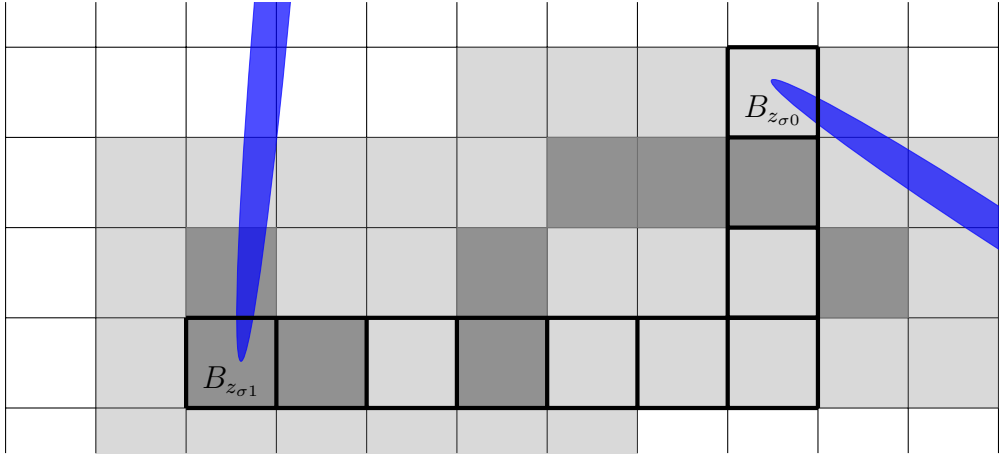

Suppose that we know that $\{x \leftrightarrow y\} \cap \cB_n\bigl(a(x),
a(y)\bigr) \cap \cW_{n}$ has occurred and fix a path $\cP$ connecting $x$ to
$y$.  Although $\cP$ can be arbitrarily long, after the gluing process we can
build a path $\cP'$ from $x$ to $y$ whose length is controlled.  Let
$\underline{0}, \underline{1} \in \{0,1\}^{n-1}$ be the all zeroes and all ones
words, respectively.
\begin{defi}
\label{defi:path_cP_prime}
Path $\cP'$ is defined as follows:
\begin{enumerate}
\setlength{\itemsep}{1pt}
\setlength{\parskip}{0pt}
\item Follow $\cP$ from $x$ till it hits the first outer circuit $O_z$ of a good box $B_z$, with $z \in \partial^{i}_{e} W_{\underline{0}}$.
\item When $\cP'$ first gets to $\partial^{i}_{e} W_{\sigma}$ with
    $\sigma \neq \underline{1}$, use outer circuits to move towards the next highway.
\item When $\cP'$ arrives at a highway, move in a straight line till
    intersecting the next $\partial^{i}_{e} W_{\sigma}$.
\item When $\cP'$ gets to $\partial^{i}_{e} W_{\underline{1}}$, use outer
    circuits to move to the last point of $\cP$ that intersects a circuit
    $O_z$ in $\partial^{i}_{e} W_{\underline{1}}$ and then use
    $\cP$ to move to $y$.
\end{enumerate}
\end{defi}

We have good estimates for the length of path $\cP'$ when moving on highways or
when using outer circuits of some $W_{\sigma}$. 
However, some parts of $\cP'$ could be wiggly (when following along $\cP$) and
that could possibly add a considerable amount to the total length. 
The next lemma allows us to improve the estimate on the length of
a path inside the covered set $\cE$ when we move inside a bounded region.

\begin{lema}[Distance on small scales]
\label{lema:distance_small_scales}
Let $W \subset \RR^{2}$ be a bounded connected set and let $x \in W$.
If $x \leftrightarrow \partial W$ then
\begin{equation}
\label{eq:chem_distance_small_scales}
D(x, \partial W) \le \frac{2}{\pi} \Vol(W) + 1,
\end{equation}
and consequently
\begin{equation}
\label{eq:distance_small_scales}
\cD(x, \partial W)
    \le \Bigl(\frac{2}{\pi} \Vol(W) + 1\Bigr) \cdot \diam(W).
\end{equation}
\end{lema}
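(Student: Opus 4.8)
The plan is to reduce both inequalities to a single combinatorial statement — a bound on the length of a shortest chain of overlapping ellipses joining $x$ to $\partial W$ — and then to control that length by an elementary area argument; the Euclidean bound will follow by straightening such a chain through convex ellipses.

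\textbf{Reduction to a minimal chain.} If $x\in\partial W$ both sides vanish, so assume $x\in\operatorname{int}(W)$. From $x\leftrightarrow\partial W$ we get a continuous path inside the covered set $\cE$ from $x$ to $\partial W$; stopping it the first time it touches $\partial W$ produces a path $\gamma\subset\cE\cap\overline W$ from $x$ to some $y\in\partial W$ with $\gamma\setminus\{y\}\subset\operatorname{int}(W)$. By local finiteness of the Poisson process $\gamma$ meets only finitely many ellipses; among them choose a chain $E_0,\dots,E_m$ with $x\in E_0$, $E_m\cap\partial W\neq\varnothing$, $E_i\cap E_{i+1}\neq\varnothing$ for all $i$, and $m+1$ as small as possible. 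Then $D(x,\partial W)\le m+1$, and minimality yields two facts: $E_i\cap E_j=\varnothing$ whenever $|i-j|\ge 2$ (else the chain could be short-cut), and no $E_i$ with $i\le m-1$ meets $\partial W$ (else the chain could be truncated).

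\textbf{Area bound, hence \eqref{eq:chem_distance_small_scales}.} Each $E_i$ with $i\le m-1$ is connected, meets $\operatorname{int}(W)$ (it meets $\overline W$ but not $\partial W$), and is therefore contained in $\operatorname{int}(W)$. Moreover every ellipse of the model has area at least $\tfrac{\pi}{2}$. By the non-consecutive disjointness, each of the two families $\{E_i:i\text{ even}\}$ and $\{E_i:i\text{ odd}\}$ consists of pairwise disjoint ellipses, and those of index at most $m-1$ all sit inside $W$; comparing the sum of their areas with $\Vol(W)$ bounds their number, and keeping track of the parities and of the single terminal ellipse $E_m$ gives $m\le\tfrac{2}{\pi}\Vol(W)$, i.e. $D(x,\partial W)\le m+1\le\tfrac{2}{\pi}\Vol(W)+1$.

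\textbf{Deducing \eqref{eq:distance_small_scales}.} Take a path $\gamma^{\ast}\subset\cE$ realizing $D(x,\partial W)$ and truncate it at its first visit to $\partial W$, so that $\gamma^{\ast}\subset\overline W$ and $\gamma^{\ast}$ still meets $D(x,\partial W)$ ellipses, in some order $F_1,\dots,F_D$ with $x\in F_1$ and $y\in F_D\cap\partial W$. Whenever $\gamma^{\ast}$ passes from $F_j$ to $F_{j+1}$ it does so at a point $p_j\in F_j\cap F_{j+1}$ lying on $\gamma^{\ast}$, hence in $\overline W$. Replace $\gamma^{\ast}$ by the polygonal path $x,p_1,\dots,p_{D-1},y$: each segment lies inside one convex ellipse, hence in $\cE$, and has both endpoints in $\overline W$, so its length is at most $\diam(W)$. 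With at most $D=D(x,\partial W)$ segments, this path has length at most $D(x,\partial W)\cdot\diam(W)\le\bigl(\tfrac{2}{\pi}\Vol(W)+1\bigr)\diam(W)$, which is \eqref{eq:distance_small_scales}.

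\textbf{Main obstacle.} The delicate part is \eqref{eq:chem_distance_small_scales}: making rigorous the passage from a path in $\cE$ to a genuinely minimal chain of ellipses (so that both the non-consecutive disjointness and the identification of the chain length with $D(x,\partial W)$ hold), and carrying out the area bookkeeping carefully enough that the constant comes out as $\tfrac{2}{\pi}$ rather than a larger multiple of $\Vol(W)$.
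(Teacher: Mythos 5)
Your proof takes essentially the same route as the paper's: extract a chain of ellipses covering a path from $x$ to $\partial W$ with the property that non-consecutive ellipses are disjoint, split the chain into even- and odd-indexed subfamilies (each pairwise disjoint and, except for the last ellipse, contained in $W$), bound the number of ellipses by an area count, and for the Euclidean bound replace the path by straight segments inside the (convex) ellipses so that each segment has length at most $\diam(W)$. The paper realizes the chain via an explicit greedy selection after first straightening $\cP$ inside each ellipse so that each time-set $I_j$ is an interval, whereas you obtain non-consecutive disjointness from minimality of the chain length; the two devices play the same role.

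There is, however, a concrete arithmetic gap in your area bookkeeping. You assert that every ellipse has area at least $\tfrac{\pi}{2}$ and then conclude $m\le\tfrac{2}{\pi}\Vol(W)$. With area $\ge\tfrac{\pi}{2}$, the even and odd subfamilies each have at most $\tfrac{2}{\pi}\Vol(W)$ members, so the count only yields $m\le\tfrac{4}{\pi}\Vol(W)$, i.e.\ $D(x,\partial W)\le\tfrac{4}{\pi}\Vol(W)+1$, which is weaker than \eqref{eq:chem_distance_small_scales}. The paper instead uses that each ellipse contains a ball of radius $1$ (hence has area $\ge\pi$), giving at most $\Vol(W)/\pi$ ellipses in each parity class and therefore $m\le\tfrac{2}{\pi}\Vol(W)$. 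So either your lower bound on the ellipse area needs to be $\pi$ rather than $\tfrac{\pi}{2}$, or the constant you derive should be $\tfrac{4}{\pi}$; as written the two statements are inconsistent. Everything else (the reduction to a minimal chain, the topological fact that an ellipse disjoint from $\partial W$ and meeting $W$ is contained in $\operatorname{int}(W)$, and the polygonal replacement argument for \eqref{eq:distance_small_scales}) is sound and matches the paper.
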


\begin{proof}
Denote by $\{e_i; 1 \le i \le m\}$ the set of all ellipses that intersect
$W$, which is almost surely finite since $W$ is bounded. Since
$x \leftrightarrow \partial W$ there is some point $y \in \partial W$
that can be reached from $x$ by a path contained in $\cE$.
Take a path $\cP$ that connects $x$ and $y$ without self-intersections.
For any fixed ellipse $e$ used by $\cP$, if $\cP \cap e$ is not a straight line
we can reduce the length of $\cP$ by connecting its first and last visit to
that ellipse directly. This modified path may intersect $\partial W$ before 
reaching $y$, but in this case we simply replace $y$ by the first point
of $\partial W$ that was reached. Thus, we can restrict ourselves to
polygonal paths.

Let $f : [0,1] \to \RR^{2}$ be a continuous and injective parametrization
of $\cP$ with $f(0) = x$ and $f(1) = y$, and define $I_j = f^{-1}(e_j)$. By the
properties of $\cP$, we know that each $I_j$ is a closed interval and that
$[0,1] = \cup_{j=1}^{m} I_j$. We build a minimal set of ellipses that covers
$\cP$ by doing a greedy exploration. We can assume that $x \in e_1$ and define
$i_1 := 1$. Then, inductively define $i_{j+1}$ as the
index of an ellipse that intersects $e_{i_{j}}$ and with rightmost point of
$I_{i_{j+1}}$ closest to 1. Since we have a finite collection,
the process ends on some index $i_n$; relabeling if necessary, we can consider
$i_j = j$ for $1 \le j \le n$.

By construction we have that $\cP \subset \cup_{j=1}^{n} e_j$ and
each $e_j$ only intersects $e_{j-1}$ and $e_{j+1}$. 
Finally,
by the same reasoning as in the beginning of the proof we can assume that $e_n$
is the first ellipse to intersect $\partial W$.
We can bound the size of $n$ by using the fact that
$\{e_{i};\; \text{$i$ odd}, 1 \le i \le n-1\}$ and
$\{e_{i};\; \text{$i$ even}, 1 \le i \le n-1\}$ are disjoint collections inside
$W$. Since each $e_i$ contains a ball of radius 1, we have that
\begin{equation*}
    n-1 \le 2 \cdot \frac{\Vol(W)}{\pi},
\end{equation*}
and we proved~\eqref{eq:chem_distance_small_scales}.
The bound on~\eqref{eq:distance_small_scales} follows by using that on each $e_i$
the length of $\cP$ is bounded by $\diam(W)$.
\end{proof}

\subsection{Proof of Theorem~\ref{teo:distance_ellipses_l2}}
\label{sub:proof_of_teo_distance_ellipses}

We now have all the ingredients to bound the Euclidean
distance between distant points inside a same cluster of ellipses.

\begin{proof}[Proof of Theorem~\ref{teo:distance_ellipses_l2}]
We can assume $\delta \in (0, 1 - \frac{2+\alpha}{4})$
and define ${\gamma := \frac{2+\alpha}{4} + \delta}$.
We analyze the probability of $\cD(x,y)$ being large by decomposing
this event with respect to ${\cB_n(a(x), a(y)) \cap \cW_n}$,
obtaining
\begin{align*}
\PP\bigl(x \leftrightarrow y,
    &\,\cD(x,y) > N + N^{\frac{2+\alpha}{4} + \delta}\bigr) \\
    &= \PP\bigl(
        \{x \leftrightarrow y\} \cap \cB_n \cap \cW_n, \,
        \cD(x,y) > N + N^{\gamma}
    \bigr) + o(1),
\end{align*}
since both $\PP \bigl( \cB_n^{\comp} \bigr)$ and
$\PP \bigl( \cB_n \cap \cW_n^{\comp} \bigr)$ tend to zero with $N$ by
Lemmas~\ref{lema:hierarchy} and \ref{lema:control_W_sigma}, respectively.
On event $\{x \leftrightarrow y\} \cap \cB_n \cap \cW_n$
there is a path $\cP$ between $x$ and $y$ and we use $\cP$ to build a path
$\cP'$ as in Definition~\ref{defi:path_cP_prime}.

Using Lemma~\ref{lema:distance_small_scales} we replace the parts of $\cP'$
that use $\cP$ on steps 1. and 4. by a path satisfying the bound
on~\eqref{eq:distance_small_scales}. Actually, we do not lose much by
applying~\eqref{eq:distance_small_scales} at every $W_{\sigma}$, since
\begin{equation*}
\Vol(W_{\sigma})
    \le {\bigl(\tfrac{9}{5}K\bigr)}^2 \cdot \# W_{\sigma}
\quad \text{and} \quad
    \diam(W_{\sigma})
    \le \bigl(\tfrac{9\sqrt{2}}{5} K\bigr) \cdot \# W_{\sigma}.
\end{equation*}
By~\eqref{eq:n_defi_consequences} we have that
$\# W_{\sigma}
    \le \tfrac{1}{3} \uc{c:ellipse_length} \tilde{N}_{n-2}
    \le \tfrac{1}{3} \uc{c:ellipse_length} \cdot 
        e^{(1/\gamma)^{3}(\log^{(2)} \tilde{N})^{\varepsilon}}
$, implying
\begin{equation*}
    \Vol(W_{\sigma}) \cdot \diam(W_{\sigma})
    \le c \cdot K^3 \cdot e^{2/\gamma^{3} \cdot (\log^{(2)} \tilde{N})^{\varepsilon}}.
\end{equation*}

The length of $\cP'$ can be estimated by
\begin{equation*}
l(\cP')
    \le c \sum_{\sigma} \Vol(W_{\sigma}) \cdot \diam(W_{\sigma})
        + \sum_{\sigma'} \diam (E_{\sigma'})
\end{equation*}
where the index in the second sum runs over all highways. 
Since there are $2^{n-1}$ gaps,
the bounds on~\eqref{eq:n_defi_consequences} imply
\begin{equation*}
\sum_{\sigma} \Vol(W_{\sigma}) \cdot \diam(W_{\sigma})
    \le c K^3 \cdot (\log \tilde{N})^{\Delta'}
        e^{2/\gamma^{3} \cdot (\log^{(2)} \tilde{N})^{\varepsilon}}.
\end{equation*}
For the second sum, notice that in the long-range model, for each $0 \le k \le n-2$ there are $2^{k}$ highways of size about $\tilde{N}_k$.
We can therefore write
\begin{align*}
\smash{\sum_{\sigma'}} \diam (E_{\sigma'})
    &\mathmakebox[1cm]{=} \diam (E_{\varnothing}) +
        \smash{\sum_{k=1}^{n-2}} 2^{k} \cdot K \Theta(\tilde{N}_k) \\[3mm]
    &\mathmakebox[1cm]{
        \smash{\stackrel{\eqref{eq:highway_diam}}{=}}
    }
        K \cdot \bigl(|z_{10} - z_{01}| + O(1)\bigr) +
        K 2^{n-1} O(\tilde{N}_1) \\
    &\mathmakebox[1cm]{
        \smash{\stackrel{\eqref{eq:n_defi_consequences}}{=}}}
        K \cdot \bigl( \tilde{N} + \Theta(\tilde{N}_{1})\bigr) +
        K O \bigl((\log \tilde{N})^{\Delta'} \tilde{N}_1\bigr) \\
    &\mathmakebox[1cm]{
        \smash{\stackrel{\eqref{eq:relation_points_sites}}{=}}}
        N + O \bigl((\log N)^{\Delta'} N^{\gamma} \bigr),
\end{align*}
implying the bound
$
l(\cP')
    \le N + O \bigl((\log N)^{\Delta'} N^{\gamma} \bigr).
$
Since $\gamma = \frac{2+\alpha}{4} + \delta$ and $\delta$ can be taken
arbitrarily small, we conclude the proof of~\eqref{eq:distance_ellipses}.
\end{proof}

\section{Bounding chemical distance}
\label{sec:bounding_chemical_distance}

Now we turn to investigating the chemical distance.
The same construction as in the proof of Theorem~\ref{teo:distance_ellipses_l2}
also provides an upper bound for the chemical distance
between $x$ and $y$, since it implies
\begin{align}
D(x,y)
    &\le c \sum_{\sigma} \Vol(W_{\sigma}) + \sum_{\sigma'} 1
    \le c K^2 e^{(1/\gamma)^{3}(\log^{(2)} \tilde{N})^{\varepsilon}} + 2^{n-1}
    \nonumber \\
\label{eq:chem_dist_up_worse}
    &\le (\log |x-y|)^{\Delta' + o(1)}.
\end{align}

However, we can actually achieve a better bound. 
In fact, although our collection of highways provides a structure of long ellipses that links far away points efficiently in terms of their Euclidean distance, it may be conceivable that the optimal strategy to minimize the chemical distance might differ.
An improvement to the bound in~\eqref{eq:chem_dist_up_worse} is given in the next result:
\begin{prop}
\label{prop:chem_dist_up}
For any $\delta > 0$ it holds that
\begin{equation}
\label{eq:chem_dist_up}
\lim_{|x-y| \to \infty}
    \PP
    \Bigl(
        D(x,y) \le \frac{2 + \delta}{\log (2/\alpha)} \cdot \log \log |x-y|
        \Bigm| x \leftrightarrow y 
    \Bigr)
    = 1.
\end{equation}
\end{prop}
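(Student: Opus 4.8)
The plan is to construct, with probability tending to one, an explicit chain of $O(\log\log N)$ pairwise overlapping ellipses reaching from a bounded neighbourhood of $x$ to a bounded neighbourhood of $y$ (here $N:=|x-y|$; by translation and rotation invariance we may put $x$ and $y$ on a horizontal line), to attach $x$ and $y$ to its two ends at negligible cost using Lemma~\ref{lema:distance_small_scales}, and to note that the event ``this whole construction succeeds'' forces, on $\{x\leftrightarrow y\}$, the bound $D(x,y)\le\tfrac{2+\delta}{\log(2/\alpha)}\log\log N$. It will then be enough to show that the complementary event has probability $o(1)$; since $\{x\in\cC_\infty\}$ and $\{y\in\cC_\infty\}$ are increasing events, FKG and uniqueness of the infinite cluster $\cC_\infty$ give $\PP(x\leftrightarrow y)\ge\PP(x\in\cC_\infty)^2>0$, and dividing by $\PP(x\leftrightarrow y)$ yields the conditional statement.

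The basic building block, which we think of as an \emph{enhanced highway}, is a fast growing sequence of scales $W_0<W_1<\dots<W_m$ together with rectangles $R_0,\dots,R_m$, where $R_j$ has short side $H_j$ and long side $W_j$, consecutive rectangles are mutually perpendicular, and $R_{j+1}$ straddles one end of $R_j$. Let $\cG_j$ be the event that some single ellipse $E_j$ crosses $R_j$ the hard way, along its long direction; by Lemma~\ref{lema:cross_box_one_ellipse}, used with aspect ratio of order $W_j/H_j\gg1$, one gets $\PP(\cG_j^{\comp})\le\exp(-c\,u\,H_j^{2}W_j^{-\alpha})$. On $\bigcap_j\cG_j$ the ellipses glue into a chain: $E_j$ and $E_{j-1}$ both meet $R_j$, one crossing it along its long and the other along its short direction, and two perpendicular crossings of a rectangle necessarily intersect. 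For $E_{j-1}$ to actually cross $R_j$ in its short direction one needs the short side of $R_j$ to be at most (a fixed multiple of) the long side of $R_{j-1}$, i.e.\ $H_j\lesssim W_{j-1}$; taking $H_j$ just above the threshold forced by the crossing probability, $H_j=\Lambda\,W_j^{\alpha/2}$ with $\Lambda=\Lambda(N)\to\infty$ very slowly, this becomes the recursion $W_j\lesssim W_{j-1}^{\,2/\alpha}$ up to the slowly varying factor $\Lambda$. Hence $W_j$ is essentially $W_0^{(2/\alpha)^{j}}$, so growing from a base scale $W_0$ up to a scale of order $N$ costs $m=(1+o(1))\,\tfrac{\log\log N}{\log(2/\alpha)}$ steps; and $H_j^{2}W_j^{-\alpha}=\Lambda^{2}$, which is what controls the failure probabilities above.

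To join $x$ to $y$ I would run two enhanced highways, one growing out of $x$ and one out of $y$, each aimed at the other point and reaching a scale of order $N$, and splice them with a single extra \emph{bridge} ellipse: the bridge is a long-direction crossing of a suitably placed and dimensioned rectangle that the last ellipse of each highway crosses in its short direction, so the bridge meets both (again because perpendicular crossings intersect). At the inner end of each highway its first $O(1)$ ellipses are required to form, by the same perpendicular-crossing mechanism, a circuit of overlapping ellipses around $B(x,W_0)$ (resp.\ $B(y,W_0)$); on $\{x\leftrightarrow y\}$ one has $x\leftrightarrow\partial B(x,W_0)$, so Lemma~\ref{lema:distance_small_scales} gives $D(x,\text{circuit})\le\tfrac2\pi\Vol(B(x,W_0))+1=O(W_0^2)$, and similarly at $y$. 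Choosing the base scale to grow slowly, $W_0=\lceil(\log\log N)^{1/4}\rceil$ and $\Lambda$ slower still, makes $W_0^2=o(\log\log N)$ while still $\Lambda^2\to\infty$ (and the analogous quantity for the bridge tends to infinity), so every crossing, circuit and bridge event fails with probability at most $\exp(-c\,u\,\Lambda^2)$; a Borel--Cantelli / union bound over the $O(\log\log N)$ rectangles of the two highways and the $O(1)$ circuit and bridge events shows the construction fails with probability $O(\log\log N)\exp(-c\,u\,\Lambda^2)=o(1)$. On success, $D(x,y)\le 2m+O(W_0^2)+O(1)=(1+o(1))\,\tfrac{2\log\log N}{\log(2/\alpha)}$, which is at most $\tfrac{2+\delta}{\log(2/\alpha)}\log\log N$ for all $N$ large, as required.

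The step I expect to be the real obstacle is making the geometry of the splicing watertight: one must pin down the positions, orientations and exact dimensions of all the rectangles (which of them is ``horizontal'', the precise overlaps, the placement keeping the entire structure inside a region of controlled size, and the order in which the Poisson point process is revealed so that the crossing events attached to overlapping rectangles stay tractable) so that the ``two perpendicular crossings must intersect'' principle genuinely applies at every junction and at the bridge, while at the same time keeping the recursion exponent arbitrarily close to $2/\alpha$ and all the failure probabilities summable. This is bookkeeping rather than a new idea, but it is where the work lives.
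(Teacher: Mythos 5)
Your proposal is correct and follows essentially the same route as the paper: both build a chain of single-ellipse crossings of rectangles whose long sides grow doubly exponentially with ratio $2/\alpha$ per step (so that $O(\log\log N/\log(2/\alpha))$ steps reach scale $N$), glue consecutive ellipses via the fact that perpendicular crossings of the same rectangle must intersect, surround $x$ and $y$ by small circuits of ellipses, and invoke Lemma~\ref{lema:distance_small_scales} to attach $x$ and $y$ to the structure at cost $o(\log\log N)$, with Lemma~\ref{lema:cross_box_one_ellipse} supplying the failure probabilities. The only differences are bookkeeping choices (the paper fixes $l_0$ and uses Borel--Cantelli where you let $W_0\to\infty$ slowly with a union bound, and the paper splices the two highways through a large circuit $Q_{n_1}$ rather than a single bridge ellipse), neither of which changes the argument in substance.
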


\begin{proof}
The main construction for this bound is a way of moving faster than through
highways, see Figure~\ref{fig:chem_up_bound}. This construction has no
counterpart on discrete long-range model, since it leverages on the
property that two ellipses that cross in their middle section are
connected.

We consider a sequence $(l_n; n \geq 0)$ of increasing lengths which is defined
recursively by $l_n = l_{n-1}^{2/\alpha}(\log l_{n-1})^{-1}$. The value of
$l_0$ is fixed later. Consider also the following collection of boxes
\begin{equation*}
B_n =
    \begin{cases}
    [0, l_n]     \times [0, l_{n-1}]	& \text{if $n$ odd} \\
    [0, l_{n-1}] \times [0, l_{n}]      & \text{if $n$ even}
    \end{cases}
\end{equation*}
and define event $A_n$ in which box $B_n$ is crossed in its
longest direction by one ellipse. By Lemma~\ref{lema:cross_box_one_ellipse}
and our choice of sequence $(l_n)$ we have
\begin{equation*}
\sum_{n\geq 1} \PP(A_n^{\comp})
    \le \sum_{n\geq 1}
        \exp\bigl[ - u \uc{c:cross_one_ellipse}^{-1}
            l_n^{-\alpha} l_{n-1}^{2}\bigr]
    = \sum_{n\geq 1}
        \exp\bigl[ - u \uc{c:cross_one_ellipse}^{-1}
            (\log l_{n-1})^{\alpha}\bigr]
\end{equation*}
Now, we check that the series above converges by estimating the growth rate of 
sequence $(l_n)$. Notice that
$l_n \le l_{n-1}^{2/\alpha} \le l_0^{(2/\alpha)^{n}}$ and also that
this upper bound implies
\begin{align*}
l_n
    = l_{n-1}^{2/\alpha} \cdot (\log l_{n-1})^{-1}
    &\geq l_{n-1}^{2/\alpha} \cdot (2/\alpha)^{-(n-1)} (\log l_0)^{-1} \\
    &\geq l_{n-2}^{(2/\alpha)^{2}} (2/\alpha)^{-(n-1) - (2/\alpha) (n-2)} (\log
    l_0)^{-(1 + (2/\alpha))} \\
    &\geq
    l_{0}^{(2/\alpha)^{n}}
    (2/\alpha)^{-\sum_{j=1}^{n} (n-j)(2/\alpha)^{j-1}}
    (\log l_0)^{-\sum_{j=1}^{n} (2/\alpha)^{j-1}}.
\end{align*}
Computing the sums on the last line one obtains as $n \to \infty$ that
\begin{align*}
\sum_{j=1}^{n} (n-j)(2/\alpha)^{j-1}
    &= (2/\alpha)^{n}
        \bigl( \tfrac{2/\alpha}{((2/\alpha) - 1)^2} + o(1) \bigr), \\
\smash{\sum_{j=1}^{n}} (2/\alpha)^{j-1}
    &= (2/\alpha)^{n} \bigl( \tfrac{1}{(2/\alpha) - 1} + o(1) \bigr),
\end{align*}
which leads to
\begin{equation*}
l_n
    \geq
    \exp\Bigl[
        \Bigl(\frac{2}{\alpha}\Bigr)^{n}
        \Bigl\{ \log l_0
            - \frac{(2/\alpha)\log (2/\alpha)}{((2/\alpha) - 1)^2}
            - \frac{\log \log l_0}{(2/\alpha) - 1}
            + o(1)
        \Bigr\}
        \Bigr]
\end{equation*}
and thus for some large $l_0 = l_0(\alpha)$ the coefficient in curly
brackets can be estimated from below by $2 + o(1)$, which implies
\begin{equation*}
\sum_{n\geq 1} \PP(A_n^{\comp})
    \le \sum_{n\geq 1}
        \exp\bigl[ - u c \bigl((2/\alpha)^{\alpha}\bigr)^{n-1}\bigr]
    < \infty
\end{equation*}
since $(2/\alpha)^{\alpha} > 1$. This means we can make $\PP(\cap_{n \geq n_0} A_n)$
arbitrarily close to one by taking $n_0$ sufficiently large. Notice that on
this event we move faster than when using highways, since we can get from
$B_{n_0}$ to distance $l_n$ using only $n - n_0$ ellipses.

Besides faster highways, we also build a useful collection of circuits.
Let $U^{0}_n$ be the event in which box
$[-2l_n, 2l_n] \times [l_n, 2l_n]$ is crossed in its longest direction
with one ellipse and let $U^{j}_n$ be the analogous events obtained by
rotating this box counterclockwise by $j \cdot \pi/2$, $j=1, 2, 3$.
Defining events $C_n := \cap_{j=1}^4 U^{j}_{n}$, we have by
Lemma~\ref{lema:cross_box_one_ellipse}
\begin{equation*}
\sum_{n\geq 1} \PP(C_n^{\comp})
    \le \sum_{n\geq 1} 4 \PP\bigl((U^{0}_{n})^{\comp}\bigr)
    \le 4 \sum_{n\geq 1} \exp[ -u c \cdot l_n^{2-\alpha} ] < \infty.
\end{equation*}
Moreover, on $C_n$ we have a circuit made of four ellipses, whose
supporting lines form a convex quadrilateral $Q_n$ that surrounds 
$[-l_{n}, l_{n}]^2$ but stays inside $[-2l_{n_0}, 2l_{n_0}]^2$.

Now we are ready to prove~\eqref{eq:chem_dist_up}.
Without loss of generality, we can assume $y$ is the origin.
Fix any $\varepsilon > 0$ and choose $n_0$ sufficiently large such that
\begin{equation*}
    \PP \bigl( \cap_{n \geq n_0} A_n \bigr) \geq 1 - \varepsilon
    \quad \text{and} \quad
    \PP \bigl( \cap_{n \geq n_0} C_n \bigr) \geq 1 - \varepsilon.
\end{equation*}
Let us also define $A_n(x)$ and $C_n(x)$ as the events analogous to
$A_n$ and $C_n$ but considering that $x$ is the origin.
Thus, if we define event
\begin{equation*}
V := \smash{
        \Bigl(\bigcap_{n \geq n_0} A_n\Bigr)    \cap 
        \Bigl(\bigcap_{n \geq n_0} C_n\Bigr)    \cap 
        \Bigl(\bigcap_{n \geq n_0} A_n(x)\Bigr) \cap 
        \Bigl(\bigcap_{n \geq n_0} C_n(x)\Bigr)},
\end{equation*}
we can write
\begin{equation*}
\bigl|\PP(o \leftrightarrow x) - \PP(\{o \leftrightarrow x\} \cap V)\bigr|
    \le \PP(V^{\comp}) \le 4 \varepsilon.
\end{equation*}
On event $\{o \leftrightarrow x\} \cap V$ we have
some path $\cP$ of ellipses connecting $o$ to $x$. For $|x| > 2 l_{n_0}$ path
$\cP$ intersects quadrilaterals $Q_{n_0}$ and $Q_{n_0}(x)$.

Let us define $n_1 = n_1(x)$ as the first index $k$
such that $x + B_{n_0} \subset [-l_{k}, l_{k}]^2$. We have that
\begin{equation*}
l_{n_1} \geq |x| + l_{n_0} > l_{n_1 - 1},
\quad \text{implying}\quad
    n_1 = \frac{\log \log |x|}{\log (2/\alpha)} + O(1).
\end{equation*}
Finally, notice that event $\cap_{n \geq n_0} A_n$ ensures $Q_{n_0}$
is connected to $Q_{n_1}$ by a path $P$ of at most $n_1$ ellipses.
We can also find a path $P(x)$ of at most $n_1$ ellipses connecting
$Q_{n_0}(x)$ to $Q_{n_1}$, when we consider event $\cap_{n \geq n_0} A_n(x)$.
Thus, we can bound the chemical distance of $o$ and $x$ by the number of
ellipses in the following path $\cP'$:
\begin{enumerate}[(i)]
\item Move from $o$ to $Q_{n_0}$ using the minimal number of ellipses and then
    follow circuit $Q_{n_0}$ till meeting $P \cap Q_{n_0}$.
\item Move from $P \cap Q_{n_0}$ to $P \cap Q_{n_1}$ and then follow circuit
    $Q_{n_1}$ till you meet $P(x) \cap Q_{n_1}$. Move from
    $P(x) \cap Q_{n_1}$ to $P(x) \cap Q_{n_0}(x)$.
\item Follow circuit $Q_{n_0}(x)$ till you meet a point of $Q_{n_0}(x)$
    connected to $x$ by a path inside $Q_{n_0}(x)$ that uses a
    minimal number of ellipses.
\end{enumerate}
By Lemma~\ref{lema:distance_small_scales} we can bound the number of ellipses
used in each of the steps (i) and (iii) by
$\frac{2}{\pi} (2l_{n_0})^{2} + 1$. Moreover, moving between points in a same
quadrilateral takes at most $4$ ellipses. This leads to
\begin{equation*}
D(o, x)
    \le 2 \cdot \Bigl(\frac{2}{\pi} \cdot 4l_{n_0}^{2} + 1\Bigr) + 3 \cdot 4 + 2 n_1
    \le \frac{2}{\log (2/\alpha)} \log \log |x| + O(1).
\end{equation*}
Taking the limit as $|x|$ tends to infinity, one obtains for arbitrary
$\varepsilon > 0$ that
\begin{equation*}
\varlimsup_{|x| \to \infty}
\PP
    \Bigl(
    x \leftrightarrow o,\ 
    D(x,o) > \frac{2 + \delta}{\log (2/\alpha)} \cdot \log \log |x|
    \Bigr)
    \le 4 \varepsilon. \qedhere
\end{equation*}
\end{proof}

\begin{figure}
\centering
\begin{tikzpicture}[scale=0.31]
\clip (-3.5,-3) rectangle (18,19);
\draw (-16,-16) rectangle ++(32,32);

\draw[very thick, green] (17, -3) --
    (16.5,17) node[black, above] {$Q_{n_1(x)}$} -- (-3, 17.5);

\coordinate (o) at (0,0);
\coordinate (x) at (7,9);
\foreach \site in {o, x}{
\begin{scope}[shift={(\site)}]
    \foreach \x in {0, 1, 2, 3}{
        \draw[dashed, rotate={90*\x}] (-2,1) rectangle ++(4,1);}
    \draw[thick] (-1,-1) rectangle ++(2,2);
    \filldraw (\site) circle (.15) node[below] {$\site$};
    \draw[very thick, green] (-1.7, -1.5) -- (1.3, -1.8) --
        (1.2, 1.6) -- (-1.8, 1.5) -- cycle;
    \draw[very thick, blue]
        (0, .5) -- (3, .7) (1.8,0) -- (1.3,6) (0,3) -- (18,2.6);
\end{scope}}
\draw [very thick, decorate, decoration={name=zigzag, amplitude=1.5pt}] {
    (o) -- (1.6, .6) -- (1.5,3) -- (17,2.6) -- (16.5, 11.8) -- (8.5, 12)
    -- (8.6, 9.6) -- (x)};
\node at (-1.8, 2.8) {$Q_{n_0}$};
\node at ($(-1.8, 2.8) + (x)$) {$Q_{n_0}(x)$};
\node at (13, 1.7) {$P$};
\node at (13, 10.7) {$P(x)$};
\end{tikzpicture}%
\hspace{2mm}%
\begin{tikzpicture}[scale=0.5]
\clip (-2,-1.4) rectangle (9.5,11);
\draw (0,0) rectangle (2,1);
\draw (0,0) rectangle (2,4);
\draw (0,0) rectangle (8,4);
\draw (0,0) rectangle (8,16);
\fill[blue, opacity=.7, rotate around={10:(1,.6)}]
    (1,.6) circle (1.4 and .1);
\fill[blue, opacity=.7, rotate around={92:(0.5,2)}]
    (0.5,2) circle (2.8 and .1);
\fill[blue, opacity=.7, rotate around={-15:(4,2)}]
    (4,2) circle (5.6 and .1);
\fill[blue, opacity=.7, rotate around={80:(6,4)}]
    (6,4) circle (11 and .1);
\node[left] at (0,1) {$l_{n_0 - 1}$};
\node[left] at (0,4) {$l_{n_0 + 1}$};
\node[left] at (0,8) {$\vdots$};
\node[below] at (2,0) {$l_{n_0}$};
\node[below] at (8,0) {$l_{n_0 + 2}$};
\node[below] at (9,0) {$\ldots$};
\end{tikzpicture}
\caption{Construction of short path $\cP'$, depicted by a zigzag line.
On the right, we show event $\cap_{n \geq n_0} A_n$ in which we have an
`improved highway'. On the left, the improved highways $P$ and $P(x)$ are
connected to quadrilaterals $Q_{n_0}, Q_{n_0}(x)$ and $Q_{n_1(x)}$ to form
$\cP'$.}
\label{fig:chem_up_bound}
\end{figure}
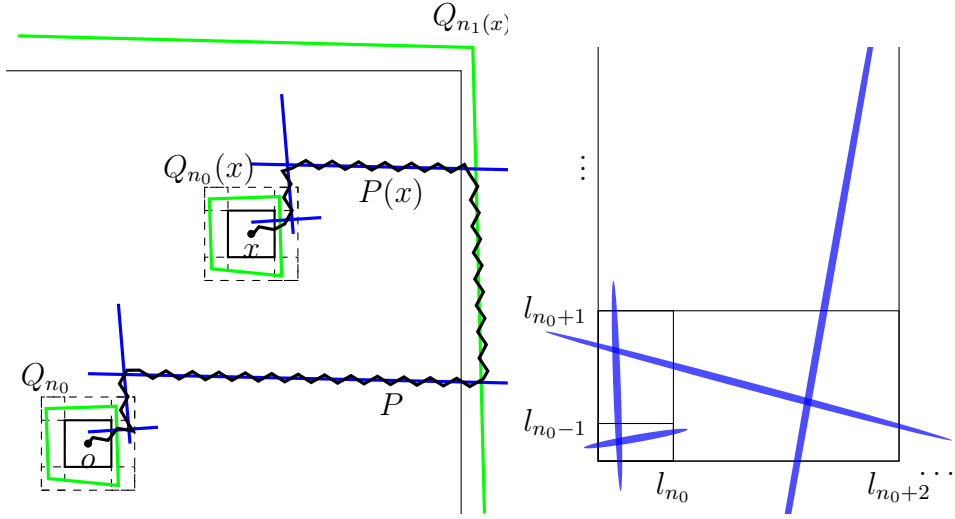

\subsection{Lower bound for chemical distance}
\label{sub:lower_bound_for_chemical_distance}

The argument from~\cite{biskup2011graph}, due originally to
Trapman~\cite{trapman2010growth}, cannot provide us a lower bound
for the chemical distance since we already have an upper bound for
$D(x,y)$ of order $\log \log |x-y|$. It is possible to employ a
similar strategy based on BK
inequality~\cite{van1985inequalities, van1996note}, but here
we are able to use a more elementary approach.

For $0 < l_1 < l_2$, we make a slight abuse of notation by denoting the
chemical distance between sets $B(l_1)$ and $\partial B(l_2)$ by
$D(l_1, l_2)$. Instead of working with $D(o,x)$ directly,
we investigate the quantity $D(1, |x|)$.
\begin{prop}
\label{prop:chem_dist_low_induction}
Let $\gamma := \frac{\alpha - 1}{2}$ and define
$C := \max\{15, u 2^{\alpha -1} \uc{c:annulus_bound}\}$, where $\uc{c:annulus_bound}$
is given by Lemma~\ref{lema:annulus_bound}.
For every $n \geq 0$ we have
\begin{equation}
\label{eq:chem_dist_bound}
\PP(D(1, |x|)
    \le 2^{n}) \le C^{n} |x|^{- 2 \gamma^{2^{n}}},
    \quad \text{for $|x| > 2^{\gamma^{1 -n - 2^{n}}}$}.
\end{equation}
\end{prop}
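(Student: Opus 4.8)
The plan is to argue by induction on $n$, with the constant $C$ chosen precisely so that the base case and the inductive doubling $2^{n}\mapsto 2^{n+1}$ both close. Recall $\gamma=\tfrac{\alpha-1}{2}\in(0,\tfrac12)$, so that the relevant exponent $\gamma^{2^{n}}$ squares when $n$ increases by one, and note that $\{D(1,L)\le 1\}$ is exactly the event $\{B(1)\leftrightarrow_{1}\partial B(L)\}$ that a single ellipse meets both $B(1)$ and $\partial B(L)$. The base case $n=0$ is then immediate from Lemma~\ref{lema:annulus_bound}: for $|x|\ge 3$ the lemma with $l_{1}=1,\ l_{2}=|x|$, together with $1-e^{-t}\le t$ and $|x|-1\ge|x|/2$, gives $\PP(D(1,|x|)\le 1)\le u\uc{c:annulus_bound}(|x|-1)^{1-\alpha}\le u\uc{c:annulus_bound}2^{\alpha-1}|x|^{-2\gamma}\le C|x|^{-2\gamma}$, and for $2<|x|<3$ one uses $\PP(D(1,|x|)\le 1)\le 1\le C|x|^{-2\gamma}$, since $|x|^{2\gamma}<3\le C$.

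For the inductive step, assume~\eqref{eq:chem_dist_bound} at level $n$ and fix $|x|>2^{\gamma^{1-(n+1)-2^{n+1}}}$; put $q:=\gamma^{2^{n}}\in(0,\tfrac12)$, $L:=|x|$, and choose the intermediate scale $r:=L^{q}$, designed so that $r^{-2q}=L^{-2q^{2}}=L^{-2\gamma^{2^{n+1}}}$. The deterministic heart of the step is the following decomposition. On $\{D(1,L)\le 2^{n+1}\}$, fix a chain of ellipses $e_{1},\dots,e_{m}$ with $m\le 2^{n+1}$ supporting a path from $B(1)$ to $\partial B(L)$, and let $e_{j}$ be the ellipse containing the point at which this path first reaches $\partial B(r)$. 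Then the initial piece of the path is supported on $e_{1},\dots,e_{j}$, so $D(1,r)\le j$, while the final piece runs from a point of $\partial B(r)$ to $\partial B(L)$ and is supported on $e_{j},\dots,e_{m}$, hence uses at most $m-j+1$ ellipses. Since $j+(m-j+1)\le 2^{n+1}+1$, either $j\le 2^{n}$ and $\{D(1,r)\le 2^{n}\}$ holds, or the final piece uses at most $2^{n}$ ellipses. In the second case, cover $\partial B(r)$ by balls $B(p_{i},1)$, $1\le i\le M$, with $M\le 4r$; the final piece starts inside some $B(p_{i},1)$ and ends at a point of $\partial B(L)$ at distance at least $L-r-1$ from $p_{i}$, so $D(B(p_{i},1),\partial B_{p_{i}}(L-r-1))\le 2^{n}$. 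By a union bound and translation invariance of the Poisson process this yields
\begin{equation*}
\PP(D(1,L)\le 2^{n+1})\;\le\;\PP(D(1,r)\le 2^{n})\;+\;M\cdot\PP\bigl(D(1,L-r-1)\le 2^{n}\bigr).
\end{equation*}

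Now one applies the induction hypothesis to both terms. Using $\gamma<1$ and $2^{n}+1\ge 2$ one checks that $r=L^{q}>2^{\gamma^{1-n-2^{n}}}$ and (since $L$ is so large that $r\le\sqrt L$ and $L-\sqrt L-1\ge L/2$) that $L-r-1\ge L/2>2^{\gamma^{1-n-2^{n}}}$, so~\eqref{eq:chem_dist_bound} at level $n$ applies and gives $\PP(D(1,r)\le 2^{n})\le C^{n}r^{-2q}=C^{n}L^{-2q^{2}}$ and $M\cdot\PP(D(1,L-r-1)\le 2^{n})\le 4L^{q}\cdot C^{n}(L/2)^{-2q}\le 8C^{n}L^{-q}\le 8C^{n}L^{-2q^{2}}$, the last step because $q\le\tfrac12$ gives $2q^{2}\le q$. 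Summing, $\PP(D(1,L)\le 2^{n+1})\le 9\,C^{n}L^{-2q^{2}}\le C^{n+1}L^{-2\gamma^{2^{n+1}}}$ since $C\ge 15\ge 9$, which is~\eqref{eq:chem_dist_bound} at level $n+1$ and completes the induction.

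The main obstacle is the inductive step, and within it the coordination of three ingredients. First, the deterministic splitting of a $2^{n+1}$-ellipse path into an inner and an outer piece, at least one of which uses only $2^{n}$ ellipses, so that the doubling in $n$ matches the budget on the number of ellipses and both pieces really are events of the form $D(\cdot,\cdot)\le 2^{n}$. Second, converting the outer piece — which lives in an annulus abutting $\partial B(r)$ rather than a ball centred at the origin — into an honest event $\{D(1,L-r-1)\le 2^{n}\}$ by means of a unit-ball net on $\partial B(r)$ and translation invariance, at the cost of the harmless factor $M\lesssim r$. Third, the choice $r=|x|^{\gamma^{2^{n}}}$, which makes $r^{-2\gamma^{2^{n}}}$ exactly the target $|x|^{-2\gamma^{2^{n+1}}}$ while the net factor $r$ is reabsorbed because $2q^{2}\le q$, together with the bookkeeping that the scale restriction $|x|>2^{\gamma^{1-n-2^{n}}}$ propagates correctly under $n\mapsto n+1$ for both $r$ and $L-r-1$; this manipulation with the tower of exponents is where essentially all the care is needed.
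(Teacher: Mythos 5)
Your proof is correct and follows essentially the same route as the paper's: induction on $n$, a split at an intermediate annulus, a pigeonhole argument on the ellipse budget, and a unit-ball covering of $\partial B(r)$ combined with translation invariance to reduce the outer piece to a $D(1,\cdot)$ event. The one visible difference is the choice of intermediate scale — you take $r=|x|^{\gamma^{2^n}}$, which makes the inner term hit the target exponent $|x|^{-2\gamma^{2^{n+1}}}$ exactly and then absorbs the outer term via $2q^2\le q$, whereas the paper takes $l=|x|^{2\gamma^{2^n}/(1+2\gamma^{2^n})}$ to equalize the two terms; both choices close the induction with the same constant $C\ge 15$, and yours is, if anything, slightly cleaner. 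You also correctly note (and handle) the small range $2<|x|<3$ where Lemma~\ref{lema:annulus_bound} does not directly apply with $l_1=1$, which the paper's write-up glosses over.
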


\begin{proof}
The proof is by induction.
By Lemma~\ref{lema:annulus_bound} we have for $|x| > 2$ that
\begin{equation*}
\PP(D(1, |x|) = 1)
    \le u \uc{c:annulus_bound} (|x| - 1)^{1-\alpha}
    \le u \uc{c:annulus_bound} 2^{\alpha - 1} |x|^{1-\alpha}
    \le C |x|^{- 2 \gamma},
\end{equation*}
and we proved case $n=0$.
For the induction step, we notice that for $n \geq 0$
\begin{align*}
\PP(D(1, |x|) \le 2^{n+1})
    &\le \PP(D(1, l) \le 2^{n}) + \PP(D(l, |x|) \le 2^{n})\\
    &\le \PP(D(1, l) \le 2^{n}) +
        (7l) \cdot \PP(D(1, |x|-l) \le 2^{n}),
\end{align*}
where in the last inequality we used union bound and the fact that any
$\partial B(l)$ with $l > 2$ can be covered by at most $7l$ balls of
radius $1$ and some of these balls must be connected to $\partial B(|x|)$.
If the induction hypothesis can be applied, the choice
\begin{equation*}
l = \exp\Bigl[
    (\log |x|) \cdot
    \frac{2 \gamma^{2^{n}}}{1 + 2 \gamma^{2^{n}}}
    \Bigr]
\end{equation*}
ensures that the two terms in the sum above are approximately the
same size, since it makes
\begin{equation*}
l^{-2 \gamma^{2^{n}}}
    = l \cdot |x|^{-2 \gamma^{2^{n}}}
    = \exp\Bigl[ 
        (\log |x|) \cdot
        \frac{-4 \gamma^{2^{n+1}}}{1 + 2 \gamma^{2^{n}}}
        \Bigr].
\end{equation*}
Let us denote $b_n = 2^{\gamma^{1 - n - 2^{n}}}$ and suppose
$|x| > b_{n+1}$. Then, it is easy to check that our choice of
$l$ satisfies $l > b_{n}$ if and only if
\begin{equation}
\label{eq:low_bound_induction}
\exp
    \Bigl[
    (\log 2) \gamma^{- n - 2^{n+1}} \cdot
    \frac{2 \gamma^{2^{n}}}{1 + 2 \gamma^{2^{n}}}
    \Bigr]
\geq
    \exp
    \Bigl[
    (\log 2) \gamma^{1 - n - 2^{n}}
    \Bigr].
\end{equation}
Inequality~\eqref{eq:low_bound_induction} is equivalent to
\begin{equation*}
\gamma^{-1-2^{n}}
    = \gamma^{- n - 2^{n+1} - (1 - n - 2^{n})}
    \geq 1 + \frac{1}{2\gamma^{2^{n}}},
\quad \text{or} \quad
1
    \geq \gamma^{1 + 2^{n}} + \frac{\gamma}{2}.
\end{equation*}
Since $\gamma = \frac{\alpha -1}{2} \in (0, 1/2)$ and $n \geq 0$,
we have that~\eqref{eq:low_bound_induction} is satisfied. Analogously,
we must check that $|x| - l \geq b_{n}$. This can be done by noticing
\begin{equation*}
\frac{l}{|x|}
    = |x|^{- \frac{1}{1 + 2 \gamma^{2^{n}}}}
    < \exp\Bigl[
        - (\log 2)
        \frac{\gamma^{- n - 2^{n+1}}}{1 + 2\gamma^{2^{n}}}
        \Bigr]
    \le 2^{ - \frac{1}{\gamma^{2} + 2\gamma^{3}} }
    \le 2^{-2},
\end{equation*}
which implies that 
$|x| - l \geq \frac{3}{4}|x| \geq \frac{3}{4} b_n > b_{n-1}$. Thus, we are
allowed to use the induction hypothesis. Using the bound
\begin{equation*}
{\bigl(1 - l/|x|\bigr)}^{-\smash{2\gamma^{2^{n}}}}
    \le \bigl(1 - 1/4 \bigr)^{-\smash{2\gamma^{2^{n}}}}
    \le (4/3)^{2\gamma}
    < 2,
\end{equation*}
we can write
\begin{align*}
\PP(D(1, |x|) \le 2^{n+1})
    &\le C^{n} l^{-2 \gamma^{2^{n}}} +
        7l \cdot C^{n} (|x| - l)^{-2 \gamma^{2^{n}}}
    \le \bigl(1 + 14 \bigr) \cdot C^{n} \cdot l^{-2 \gamma^{2^{n}}} \\
    &\le C^{n+1}
        \exp\Bigl[ 
        (\log |x|) \cdot
        \frac{-4 \gamma^{2^{n+1}}}{1 + 2 \gamma^{2^{n}}}
        \Bigr].
\end{align*}
Finally, we can estimate
\begin{equation*}
\frac{
\PP(D(1, |x|) \le 2^{n+1})
}{
C^{n+1} |x|^{- 2 \gamma^{2^{n+1}}}
}
    \le \exp\Bigl[ 
        (\log |x|) \cdot 2 \gamma^{2^{n+1}}
        \Bigl(
        1 - \frac{2}{1 + 2 \gamma^{2^{n}}}
        \Bigr)
        \Bigr]
    \le 1,
\end{equation*}
since $1 - \frac{2}{1 + 2 \gamma^{2^{n}}} \in (-1, 0)$.
\end{proof}

From Proposition~\ref{prop:chem_dist_low_induction}, we can conclude the
following
\begin{coro}
Let $0 < \delta < \frac{1}{\log (1/\gamma)}$, where
$\gamma := \frac{\alpha - 1}{2}$. Then we have
\begin{equation}
\lim_{|x| \to \infty}
    \PP\bigl(D(1, |x|) \le \delta \log \log |x|\bigr) = 0.
\end{equation}
\end{coro}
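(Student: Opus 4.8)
The plan is to deduce the corollary from Proposition~\ref{prop:chem_dist_low_induction}, but to get the sharp constant I would first upgrade that proposition from dyadic thresholds $2^n$ to arbitrary integer thresholds $m$. The induction in its proof goes through with the block of $2^{n+1}$ ellipses replaced by blocks of $\lceil m/2\rceil$ and $\lfloor m/2\rfloor$ ellipses: splitting a path of at most $m$ ellipses from $B(1)$ to $\partial B(|x|)$ at its first crossing of $\partial B(l)$ and covering $\partial B(l)$ by at most $7l$ unit balls gives
\begin{equation*}
\PP\bigl(D(1,|x|) \le m\bigr)
  \le \PP\bigl(D(1,l) \le \lceil m/2\rceil\bigr)
   + 7l\,\PP\bigl(D(1,|x|-l) \le \lfloor m/2\rfloor\bigr).
\end{equation*}
The base case $m=1$ is Lemma~\ref{lema:annulus_bound}, with exponent $c_1=2\gamma$ exactly as in the proposition, and balancing the two terms with $l=\bigl(|x|^{c_{\lfloor m/2\rfloor}}/7\bigr)^{1/(1+c_{\lceil m/2\rceil})}$ yields the exponent recursion $c_m=\tfrac{c_{\lceil m/2\rceil}\,c_{\lfloor m/2\rfloor}}{1+c_{\lceil m/2\rceil}}$. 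From this recursion one has $c_m\le c_{\lfloor m/2\rfloor}\le\dots\le c_1=2\gamma<1$ throughout (here $\gamma<\tfrac12$ is used), and then the identity $\gamma^{\lceil m/2\rceil}\gamma^{\lfloor m/2\rfloor}=\gamma^{m}$ gives $c_m>\tfrac{(2\gamma^{\lceil m/2\rceil})(2\gamma^{\lfloor m/2\rfloor})}{2}=2\gamma^{m}$ by strong induction. Tracking the constants and the range of validity as in the proposition, I would obtain: there is $\Phi$ with $\Phi(m)\le 2^{\gamma^{-m}}$ such that
\begin{equation*}
\PP\bigl(D(1,|x|)\le m\bigr)\le C^{\lceil\log_2 m\rceil+1}\,|x|^{-2\gamma^{m}}
\end{equation*}
for every integer $m\ge1$ and every $|x|>\Phi(m)$.

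With this refinement the corollary is short. For $|x|$ large, set $m:=\lfloor\delta\log\log|x|\rfloor$, a positive integer; since $D(1,|x|)$ is integer valued, $\{D(1,|x|)\le\delta\log\log|x|\}=\{D(1,|x|)\le m\}$. As $\delta<\tfrac1{\log(1/\gamma)}$ we have $\delta\log(1/\gamma)<1$, so $\Phi(m)\le 2^{\gamma^{-m}}\le 2^{(\log|x|)^{\delta\log(1/\gamma)}}$, whose exponent is $o(\log|x|)$, hence $\Phi(m)<|x|$ for $|x|$ large and the displayed bound applies. Now $m\le\delta\log\log|x|$ gives $\gamma^{m}\ge\gamma^{\delta\log\log|x|}=(\log|x|)^{-\delta\log(1/\gamma)}$, so
\begin{equation*}
|x|^{-2\gamma^m}=\exp\bigl(-2\gamma^m\log|x|\bigr)\le\exp\bigl(-2(\log|x|)^{\,1-\delta\log(1/\gamma)}\bigr),
\end{equation*}
which tends to $0$ faster than any power of $\log|x|$, while $C^{\lceil\log_2 m\rceil+1}$ is only of order $(\log\log|x|)^{O(1)}$. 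Hence $\PP\bigl(D(1,|x|)\le\delta\log\log|x|\bigr)\to0$, which is the corollary.

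The real work is in the first paragraph. The reason the upgrade is needed is that feeding the event $\{D(1,|x|)\le\delta\log\log|x|\}$ directly into the dyadic Proposition~\ref{prop:chem_dist_low_induction} forces one to round $\delta\log\log|x|$ up to the next power of $2$, which can almost double the relevant exponent and would only give the weaker range $\delta<\tfrac1{2\log(1/\gamma)}$; the asymmetric split $m=\lceil m/2\rceil+\lfloor m/2\rfloor$ is precisely what removes this loss, since $\gamma^{\lceil m/2\rceil}\gamma^{\lfloor m/2\rfloor}=\gamma^{m}$ with no slack. I expect the only delicate bookkeeping to be (i) checking that $\gamma<\tfrac12$ keeps every $c_m$ below $1$, so that the ``$\tfrac{4}{1+c}>2$'' step of the proposition's proof still closes the induction with the stated constant $C$, and (ii) propagating the double-exponential lower bound $|x|>\Phi(m)$ down the recursion --- a routine adaptation of the ``$|x|>2^{\gamma^{1-n-2^n}}$'' hypothesis already handled there.
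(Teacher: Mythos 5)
Your proposal is correct, and it actually does more than reprove the corollary: it repairs a subtle flaw in the paper's own argument. The paper applies Proposition~\ref{prop:chem_dist_low_induction} with $n=\lfloor(\log 2)^{-1}(\log\delta+\log\log\log|x|)\rfloor$, so that $2^{n}\le\delta\log\log|x|<2^{n+1}$, and then bounds $\PP\bigl(D(1,|x|)\le 2^{n}\bigr)$. But $\{D(1,|x|)\le 2^{n}\}\subseteq\{D(1,|x|)\le\delta\log\log|x|\}$, so this is a lower, not an upper, bound for the probability the corollary is about; to get an upper bound one must round up to $2^{n+1}$, which — exactly as you observe — can be as large as $2\delta\log\log|x|$ and therefore forces the exponent $\gamma^{2^{n+1}}$ to be as small as $(\log|x|)^{-2\delta\log(1/\gamma)}$, yielding the corollary only for $\delta<\tfrac{1}{2\log(1/\gamma)}$. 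So the dyadic proposition as stated does not reach the claimed range of $\delta$.

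Your fix — redoing the induction of Proposition~\ref{prop:chem_dist_low_induction} for arbitrary integer thresholds $m$ with the balanced split $m=\lceil m/2\rceil+\lfloor m/2\rfloor$ — is the right one. The decomposition
\begin{equation*}
\PP\bigl(D(1,|x|)\le m\bigr)\le \PP\bigl(D(1,l)\le\lceil m/2\rceil\bigr)+7l\,\PP\bigl(D(1,|x|-l)\le\lfloor m/2\rfloor\bigr)
\end{equation*}
is legitimate (it uses $D(1,l)+D(l,|x|)\le D(1,|x|)+1$, which follows from splitting an optimal path at its first crossing of $\partial B(l)$, together with $\lceil m/2\rceil+\lfloor m/2\rfloor=m$), the balancing choice of $l$ yields the exponent recursion $c_m=\tfrac{c_{\lceil m/2\rceil}c_{\lfloor m/2\rfloor}}{1+c_{\lceil m/2\rceil}}$, and your strong-induction argument ($c_j\le c_1=2\gamma<1$ for all $j$, hence $c_m\ge\tfrac{4\gamma^{m}}{2}=2\gamma^{m}$) is sound. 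With $m=\lfloor\delta\log\log|x|\rfloor$ the resulting bound $|x|^{-2\gamma^{m}}\le\exp\bigl[-2(\log|x|)^{1-\delta\log(1/\gamma)}\bigr]$ decays, and $C^{O(\log m)}$ is negligible, giving the stated range $\delta<\tfrac{1}{\log(1/\gamma)}$. The one place where you are (self-admittedly) light on detail is propagating the cutoff $\Phi(m)$ through the new recursion; what matters is only that $\log\log\Phi(m)=(1+o(1))\,m\log(1/\gamma)$, which is what the analogue of the $b_n$-bookkeeping delivers and is all that is needed for $\Phi(m)<|x|$ when $\delta\log(1/\gamma)<1$. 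In short: your proof is correct, it takes the same route as the paper (induction on a midpoint split plus Lemma~\ref{lema:annulus_bound} as base case), and the non-dyadic refinement you introduce appears to be genuinely necessary to obtain the corollary with the constant as stated.
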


\begin{proof}
Let us choose
\vspace{-3mm}
\begin{equation*}
n = \lfloor
    (\log 2)^{-1} \cdot \bigl(\log \delta + \log \log \log |x|\bigr)
    \rfloor,
\end{equation*}
which makes
$2^{n} \in \bigl[\frac{\delta}{2} \log \log |x|, \delta \log \log |x|\bigr]$.
Notice that $|x| > 2^{\gamma^{1 - n - 2^{n}}}$ for large $|x|$, since
$1/\delta > \log (1/\gamma)$ implies
\begin{align*}
\log \log 2^{\gamma^{1 - n - 2^{n}}}
    &= \log \log 2 + (2^{n}+n-1) \log (1/\gamma) \\
    &\le \frac{2^{n}}{\delta} \le \log \log |x|.
\end{align*}
Hence, by Proposition~\ref{prop:chem_dist_low_induction} we have
\begin{align*}
\PP(D(1, |x|) \le 2^{n})
    &\le C^{n} |x|^{-2 \gamma^{2^{n}}} \\
    &\le \exp\bigl[
        \log C \cdot n - 2 \log |x| \cdot \gamma^{\delta \log \log |x|}
    \bigr] \\
    &\le \exp\biggl[
        \frac{\log C}{\log 2} \cdot \log \log \log |x| + O(1)
        - 2 (\log |x|)^{1 - \delta \log (1/\gamma)}
    \biggr]
\end{align*}
    which tends to zero since $\delta \log (1/\gamma) < 1$.
\end{proof}

\bibliographystyle{plain}
\bibliography{distance_ellipses}
\end{document}